\documentclass[12pt,reqno]{amsart}
\usepackage[utf8]{inputenc}
\usepackage[T1]{fontenc}
\usepackage{amsmath}
\usepackage{amsfonts}
\usepackage{amssymb,amsthm}
\usepackage{graphicx}
\usepackage{booktabs}
\usepackage{float}
\usepackage{cancel}
\usepackage{float}
\usepackage{mathrsfs}
\usepackage{hyperref}
\usepackage{color}
\usepackage{algorithm}
\usepackage{algpseudocode}
\usepackage{etoolbox}
\usepackage{comment}
\usepackage[margin=1in]{geometry}
\usepackage{tikz}
\usepackage{pgfplots}
\usetikzlibrary{angles, quotes}
\newtheorem{thm}{Theorem}[section]
\newtheorem{defi}[thm]{Definition}
\newtheorem{pro}[thm]{Proposition}
\newtheorem{cor}[thm]{Corollary}
\newtheorem{lem}[thm]{Lemma}
\newtheorem{rem}[thm]{Remark}

\title[Non-Uniform Discrete Pr\"ufer]{Theory and Computation of Discrete Sturm--Liouville Problems on Non-Uniform Grids via the Pr\"ufer Transformation}
\author{Bailyn Hall\textsuperscript{1}, Kimsear Lor\textsuperscript{2}\\ \\ Project Advisors: Shalmali Bandyopadhyay\textsuperscript{3}, Jacob Blazejewski\textsuperscript{4}}
\thanks{\textsuperscript{1}Department of Mathematics and Statistics, University of Tennessee at Martin, Martin, TN. ORCID: 0009-0000-9908-1661}
\thanks{\textsuperscript{2}Department of Mathematics and Statistics, University of Tennessee at Martin, Martin, TN. ORCID: 0009-0000-3564-260X }
\thanks{\textsuperscript{3}Department of Mathematics and Statistics, University of Tennessee at Martin, Martin, TN. ORCID: 0000-0003-3971-6033}
\thanks{\textsuperscript{4}Department of Mathematical Sciences, Appalachian State University, Boone, NC. ORCID: 0000-0002-5346-3207}
\usepackage{amsmath, amssymb, amsthm}

\usepackage{graphicx}
\usepackage{tikz}
\usepackage{hyperref}
\newtheorem{definition}{Definition}[section]
\newcommand{\R}{\mathbb{R}}
\newcommand{\N}{\mathbb{N}}

\numberwithin{equation}{section}

\usepackage{multicol}

\begin{document}

\begin{abstract}
We study a discrete version of the Sturm--Liouville eigenvalue problem on grids whose spacing may vary from point to point, using the discrete Pr\"ufer transformation. We show that any eigenvalues of the problem are real and that there are finitely many of them. We then compare two numerical methods for computing the eigenvalues, regular shooting and Pr\"ufer-based shooting, and find that the Pr\"ufer method remains accurate on non-uniform grids where regular shooting loses accuracy or fails.
\end{abstract}

\maketitle
\markright{\MakeUppercase{Non-Uniform Discrete Pr\"ufer}}
\pagestyle{headings}
\noindent {\bf Keywords:} Non-uniform grid, difference equation, Sturm-Liouville Theory, Pr\"ufer Transformation

\smallskip

\noindent {\bf MSc Subject Classificatin (2020):} 39A27, 39A06, 65Q10

\section{Introduction}
\label{sec:intro}

We consider the Sturm--Liouville eigenvalue problem with Dirichlet boundary conditions,
\begin{equation} \label{eq:main}
\begin{cases}
\Delta_h\!\left(r_k\Delta_h x_k\right) + \left(\lambda w_k - q_k\right) x_{k+1} = 0, & k = 0, \ldots, n-1, \\
x_0 = x_{n+1} = 0,
\end{cases}
\end{equation}
where \(r_k, w_k, q_k : \N \to \R\) are real sequences with \(r_k \neq 0\) and \(w_k > 0\) for all \(k\), and \(\Delta_h\) is the non-uniform difference operator
\begin{equation}
\label{def:2.2}
\Delta_h f_k = \frac{f_{k+1} - f_k}{t_{k+1} - t_k} = \frac{f_{k+1} - f_k}{h_k},    
\end{equation}
defined on a grid \(\{t_k\}\) with step sizes \(h_k = t_{k+1} - t_k > 0\) that may vary from node to node. This is a second-order difference equation, and we wish to find the values of the parameter \(\lambda\), called eigenvalues, for which it has a nontrivial solution.

Equation \eqref{eq:main} is the discrete version of the Sturm--Liouville differential equation
\[
(r(x)y')' + \left(\lambda w(x) - q(x)\right) y = 0.
\]
Such equations appear throughout mathematical physics, since separating variables in the heat, wave, and related equations leads to this form. What sets a Sturm--Liouville equation apart from a general second-order equation is the shape of its leading term: it is a derivative of a coefficient times a derivative, $(r(x)y')'$, rather than a plain second derivative. The coefficient $r(x)$ is a physical quantity, such as the varying stiffness or conductivity of a material, so the product $r(x) y'$ is the meaningful quantity to track rather than the slope $y'$ by itself. In the discrete setting this quantity is $r_k \Delta_h x_k$, which we call the \emph{quasi-derivative}, and the leading term $\Delta_h(r_k \Delta_h x_k)$ is a difference of the quasi-derivative, mirroring $(r(x)y')'$ in the continuous case. Interest in discrete Sturm--Liouville problems of this kind has grown more recently, alongside the development of numerical methods for computing their eigenvalues.

To study \eqref{eq:main} we use the discrete Pr\"ufer transformation, which replaces the single second-order equation by a pair of first-order equations for an amplitude and a phase, from which the eigenvalues are easier to extract. The transformation is built on the quasi-derivative $r_k \Delta_h x_k$ rather than on the plain difference $\Delta_h x_k$. This choice is what keeps the transformation aligned with the structure of the equation: because the quasi-derivative is the natural quantity in the self-adjoint form, taking $x_k$ and $r_k \Delta_h x_k$ as the two Pr\"ufer coordinates preserves the self-adjoint structure and keeps the resulting phase equation well posed.

Bohner--Do\v{s}l\'y~\cite{bohner-dosly} introduced the quasi-derivative Pr\"ufer transformation for self-adjoint difference equations, and \c{C}etinkaya--Er--Menken~\cite{cetinkaya-er-menken} used it to estimate eigenvalues, both on uniform grids. Our contribution is to carry the quasi-derivative Pr\"ufer transformation onto non-uniform grids and to study the resulting problem. We prove two main results. The first shows that a nontrivial solution of \eqref{eq:main} corresponds exactly to a solution of the Pr\"ufer system, so the two formulations are equivalent and the phase is well defined. The second shows that any eigenvalue of \eqref{eq:main} is real, and that the problem has exactly $n$ eigenvalues, counted with multiplicity.

Before stating these results, we record the Pr\"ufer substitution itself.
\begin{definition}
    The \emph{Pr\"ufer substitution} for \eqref{eq:main} is given by
    \begin{equation}
x_k=\mu_k\sin\theta_k,\label{lambda eq:Prufer A}
    \end{equation} 
    and
    \begin{equation}
    r_k\Delta_hx_k=\mu_k\cos\theta_k, \label{lambda eq:Prufer B} 
    \end{equation}
    where $\mu_k>0$ and $\theta_k \in [0, 2 \pi)$, for $k \in \{0,1, \ldots,n\}$.
\end{definition}

The first main result states the equivalence between the original equation and the Pr\"ufer system.
\begin{thm}\label{thm:prufer}
Fix $\lambda \in \mathbb{R}$, and suppose $\mu_k > 0$ and $\theta_k$ are related to
$x_k$ by \eqref{lambda eq:Prufer A} and \eqref{lambda eq:Prufer B}. Then $x$ is a
nontrivial solution of \eqref{eq:main} if and only if $\mu_k$ and $\theta_k$ satisfy
\begin{align}
\Delta_h\mu_k = -\mu_k\Bigg(\frac{h_k\bigl[(\Delta_h\sin\theta_k)^2 + (\Delta_h\cos\theta_k)^2\bigr]}{2}-\frac{1}{r_k}\sin\theta_{k+1}\cos\theta_k\notag\\
+
\frac{\left(\lambda w_k-q_k\right)\, h_k}{r_k}\cos\theta_{k+1}\cos\theta_{k}
&+ \left(\lambda w_k-q_k\right)\cos\theta_{k+1}\sin\theta_k\Bigg) \label{lambda eq:thmA}
\end{align}
\begin{align}
\label{lambda eq:thmB}
\sin\bigl(h_k\,\Delta_h\theta_k\bigr)
={}& \frac{h_k}{r_k}\Bigl(\cos\theta_k\cos\theta_{k+1}
+ \left(\lambda w_k-q_k\right)\,h_k\cos\theta_k\sin\theta_{k+1}+ \left(\lambda w_k-q_k\right)\,r_k\sin\theta_k\sin\theta_{k+1}\Bigr)
\end{align}
\end{thm}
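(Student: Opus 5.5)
The plan is to rewrite \eqref{eq:main} as a first-order system in the pair $(x_k, z_k)$, with $z_k := r_k\Delta_h x_k$, and then substitute the Pr\"ufer variables. Put $c_k := \lambda w_k - q_k$. Then \eqref{eq:main} is equivalent to
\begin{equation*}
x_{k+1} = x_k + \frac{h_k}{r_k} z_k, \qquad z_{k+1} = z_k - h_k c_k x_{k+1}.
\end{equation*}
Since $r_k\neq 0$ and $h_k>0$, the map $(x_k,z_k)\mapsto(x_{k+1},z_{k+1})$ is an invertible linear map with determinant $1$. Consequently a solution is nontrivial iff $(x_k,z_k)\neq(0,0)$ for every $k$. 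This is exactly the condition under which $\mu_k=\sqrt{x_k^2+z_k^2}>0$ and $\theta_k$ are well defined. It gives the bijection between nontrivial solutions and Pr\"ufer data; the boundary conditions translate to $\theta_0\equiv\theta_{n+1}\equiv 0 \pmod \pi$ and play no further role.

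For the forward direction, I substitute $x_k=\mu_k\sin\theta_k$ and $z_k=\mu_k\cos\theta_k$ into the two recursions:
\begin{align*}
\mu_{k+1}\sin\theta_{k+1} &= \mu_k\Bigl(\sin\theta_k + \frac{h_k}{r_k}\cos\theta_k\Bigr),\\
\mu_{k+1}\cos\theta_{k+1} &= \mu_k\cos\theta_k - h_k c_k\,\mu_{k+1}\sin\theta_{k+1}.
\end{align*}

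\emph{Phase equation \eqref{lambda eq:thmB}.} Note that $h_k\Delta_h\theta_k=\theta_{k+1}-\theta_k$, so $\sin(\theta_{k+1}-\theta_k)=\sin\theta_{k+1}\cos\theta_k-\cos\theta_{k+1}\sin\theta_k$. I multiply the first equation by $\cos\theta_{k+1}$ and the second by $\sin\theta_{k+1}$, then subtract. This eliminates the left-hand sides and leaves
\begin{equation*}
\mu_k\sin(\theta_{k+1}-\theta_k)=\mu_k\frac{h_k}{r_k}\cos\theta_k\cos\theta_{k+1}+h_kc_k\,\mu_{k+1}\sin^2\theta_{k+1}.
\end{equation*}
The term $\mu_{k+1}\sin\theta_{k+1}$ is then replaced using the first equation, which writes it as $\mu_k(\sin\theta_k+\tfrac{h_k}{r_k}\cos\theta_k)$. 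After dividing by $\mu_k>0$, the result is $\tfrac{h_k}{r_k}\bigl(\cos\theta_k\cos\theta_{k+1}+c_kh_k\cos\theta_k\sin\theta_{k+1}+c_kr_k\sin\theta_k\sin\theta_{k+1}\bigr)$, which is \eqref{lambda eq:thmB}.

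\emph{Amplitude equation \eqref{lambda eq:thmA}.} Here I form the combination that isolates $\mu_{k+1}$: multiply the first equation by $\sin\theta_{k+1}$ and the second by $\cos\theta_{k+1}$, and add, giving
\begin{equation*}
\mu_{k+1}=\mu_k\Bigl[\cos(\theta_{k+1}-\theta_k)+\frac{h_k}{r_k}\cos\theta_k\sin\theta_{k+1}\Bigr]-h_kc_k\,\mu_{k+1}\sin\theta_{k+1}\cos\theta_{k+1}.
\end{equation*}
Again $\mu_{k+1}\sin\theta_{k+1}$ is replaced via the first recursion. Then $\Delta_h\mu_k=(\mu_{k+1}-\mu_k)/h_k$ is computed, and $1-\cos(\theta_{k+1}-\theta_k)$ is rewritten as $\tfrac12\bigl[(\sin\theta_{k+1}-\sin\theta_k)^2+(\cos\theta_{k+1}-\cos\theta_k)^2\bigr]=\tfrac{h_k^2}{2}\bigl[(\Delta_h\sin\theta_k)^2+(\Delta_h\cos\theta_k)^2\bigr]$. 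Dividing by $h_k$ produces the first term of \eqref{lambda eq:thmA}. The remaining terms should match $-\tfrac1{r_k}\sin\theta_{k+1}\cos\theta_k+\tfrac{c_kh_k}{r_k}\cos\theta_{k+1}\cos\theta_k+c_k\cos\theta_{k+1}\sin\theta_k$, up to the overall sign.

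\emph{Converse.} Suppose \eqref{lambda eq:thmA} and \eqref{lambda eq:thmB} hold. I reverse the elimination: the two derived identities are two independent linear combinations of the two recursions, with coefficient matrix the rotation by $\theta_{k+1}$, which is invertible. Hence they imply both recursions, and therefore \eqref{eq:main}, with $x_k=\mu_k\sin\theta_k\neq$ identically zero because $\mu_k>0$.

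\emph{Main obstacle.} The hard step is the bookkeeping in the amplitude equation: getting signs right and choosing where to evaluate $\mu_{k+1}$ versus $\mu_k$ so the stated form emerges exactly. There is a further subtlety in the converse. Equation \eqref{lambda eq:thmB} determines only $\sin(\theta_{k+1}-\theta_k)$, so it fixes $\theta_{k+1}$ only up to the reflection ambiguity. I would resolve this using \eqref{lambda eq:thmA} together with positivity of $\mu_{k+1}$ to pin down the branch. If that fails, I would phrase the converse as being relative to the $\theta_k$ defined by \eqref{lambda eq:Prufer A}--\eqref{lambda eq:Prufer B}, which is how the statement is framed.
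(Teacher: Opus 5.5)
Your proposal is correct and is essentially the paper's proof: your two one-step recursions are $h_k$ times the paper's \eqref{lambda eqA} and \eqref{lambda eqB} (the latter with $x_{k+1}$ expanded), your multipliers are exactly the paper's matrix $M$, the amplitude terms match \eqref{lambda eq:thmA} exactly with no sign discrepancy, and your converse is the paper's involution argument. One small correction: because you also substitute for $\mu_{k+1}\sin\theta_{k+1}$, the matrix taking your two recursions to the two Pr\"ufer equations is not a rotation but $\begin{pmatrix}\sin\theta_{k+1}-h_kc_k\cos\theta_{k+1} & \cos\theta_{k+1}\\ \cos\theta_{k+1}+h_kc_k\sin\theta_{k+1} & -\sin\theta_{k+1}\end{pmatrix}$, which is a reflection composed with a shear and has determinant $-1$, so the invertibility your converse needs still holds and no branch analysis is required.
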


The second main result concerns the eigenvalues themselves.
\begin{thm}\label{thm:spectral}
If $\lambda$ is an eigenvalue of \eqref{eq:main}, then $\lambda$ is real. Moreover, \eqref{eq:main} has exactly $n$ eigenvalues, counted with multiplicity.
\end{thm}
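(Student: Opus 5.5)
Rewrite \eqref{eq:main} as a matrix eigenvalue problem and use symmetry. Expanding $\Delta_h(r_k\Delta_h x_k)$ with \eqref{def:2.2}, for $k=0,\dots,n-1$ we get
\[
\frac{1}{h_k}\Bigl(\frac{r_{k+1}}{h_{k+1}}(x_{k+2}-x_{k+1}) - \frac{r_k}{h_k}(x_{k+1}-x_k)\Bigr) + (\lambda w_k - q_k)x_{k+1} = 0 .
\]
Multiplying by $h_k$ and writing $y_j = x_j$ for $j=1,\dots,n$ (with $x_0=x_{n+1}=0$) gives
\[
-\frac{r_k}{h_k}x_k + \Bigl(\frac{r_k}{h_k}+\frac{r_{k+1}}{h_{k+1}} + h_k q_k\Bigr)x_{k+1} - \frac{r_{k+1}}{h_{k+1}}x_{k+2} = \lambda\, h_k w_k\, x_{k+1}.
\]
So the problem is $Ay=\lambda Wy$. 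Here $A$ is the $n\times n$ real symmetric tridiagonal matrix with diagonal entries $a_{k+1}=r_k/h_k+r_{k+1}/h_{k+1}+h_kq_k$ and off-diagonal entries $-r_{k+1}/h_{k+1}$. $W=\operatorname{diag}(h_0w_0,\dots,h_{n-1}w_{n-1})$ is positive definite because $h_k>0$ and $w_k>0$. The key point is that $A$ is symmetric: the coefficient of $x_{k+2}$ in row $k+1$ and of $x_{k+1}$ in row $k+2$ are both $-r_{k+1}/h_{k+1}$, but only after multiplying by $h_k$. This is why the multiplication by $h_k$ (rather than working with the unscaled non-uniform operator) is essential.

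\emph{Reality.} Suppose $\lambda\in\mathbb{C}$ and $y\neq 0$ solves $Ay=\lambda Wy$. Take the inner product with $\bar y$: $\bar y^{T}Ay=\lambda\,\bar y^{T}Wy$. The left side is real since $A$ is real symmetric (it equals its own conjugate). The quantity $\bar y^TWy=\sum h_kw_k|y_{k+1}|^2$ is strictly positive. Hence $\lambda$ is real. Alternatively, reduce to a standard symmetric problem via $z=W^{1/2}y$, which gives $W^{-1/2}AW^{-1/2}z=\lambda z$.

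\emph{Counting.} A nontrivial solution of \eqref{eq:main} is exactly a nonzero $y$ with $(A-\lambda W)y=0$. Note that $y\neq 0$ is automatic for a nontrivial $x$, since $x_0=x_{n+1}=0$. So eigenvalues are the roots of $\det(A-\lambda W)=0$. Since $\det(A-\lambda W)=\det(W)\det(W^{-1/2}AW^{-1/2}-\lambda I)$, this is a degree-$n$ polynomial in $\lambda$ with leading coefficient $(-1)^n\det W\neq 0$. It therefore has exactly $n$ roots counted with multiplicity, all real by the above, and they are the eigenvalues of the symmetric matrix $S=W^{-1/2}AW^{-1/2}$.

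The main obstacle is bookkeeping in the symmetrization step: indices must line up so that off-diagonal entries match. I would double-check the first and last rows separately, where $x_0$ and $x_{n+1}$ drop out. I would also note that $r_k\neq 0$ ensures the off-diagonals are nonzero. This makes each eigenvalue geometrically simple, since $x_0=0$ and $x_1$ determine the solution by forward recursion. So algebraic and geometric multiplicities agree for the symmetric $S$, and the $n$ eigenvalues are in fact distinct.
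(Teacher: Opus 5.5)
Your proof is correct, but it reaches the result by a different route from the paper's.

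\textbf{The paper's route.} For reality, the paper never uses a matrix. It introduces the weighted inner product $\langle u,v\rangle=\sum_{k=0}^{n}h_k u_{k+1}\overline{v_{k+1}}$ and proves via summation by parts (Corollary~\ref{cor:summation}) that $\mathcal{L}_0$ is self-adjoint, hence so is $\mathcal{A}=Q-\mathcal{L}_0$ (Lemma~\ref{lem:opA}). It then concludes from $(\lambda-\bar\lambda)\langle Wx,x\rangle=0$ with $\langle Wx,x\rangle>0$. For the count, the paper writes down the same tridiagonal matrix and diagonal weight as your $A$ and $W$, but never remarks that the matrix is symmetric. It instead expands $\det(A-\lambda W)$ by the Leibniz formula: only the identity permutation has $n$ fixed points, so the $\lambda^n$ coefficient is $(-1)^n\prod_k h_kw_k\neq0$.

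\textbf{How the two connect, and what yours buys.} Your observation that symmetry appears exactly after multiplying by $h_k$ is the matrix form of the paper's self-adjointness lemma. Identifying a sequence with its interior values $x_1,\dots,x_n$, one has $\langle \mathcal{A}x,y\rangle=\overline{y}^{T}Ax$. The weight $h_k$ in the inner product is precisely your multiplication, and the $k=n$ term vanishes because $y_{n+1}=0$. Using this directly gives reality in one line. It also gives the degree from the congruence $A-\lambda W=W^{1/2}(S-\lambda I)W^{1/2}$, with no summation by parts and no permutation counting. And it yields more than the theorem asks: the eigenvalues are those of the real symmetric matrix $S$. Your recursion argument (using $r_k\neq0$) shows each eigenspace is one-dimensional, so the $n$ eigenvalues are in fact distinct, which the paper does not state.

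\textbf{What the paper's route buys.} Its operator argument mirrors the continuous Sturm--Liouville proof and does not rely on a finite matrix representation. The reality part would therefore transfer to settings where no such matrix is available.
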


The rest of the paper is organized as follows. Section~\ref{sec:prelim} develops the discrete-calculus tools and the operator framework, and shows that the operator behind \eqref{eq:main} is self-adjoint. Section~\ref{sec:Prufer} proves Theorem~\ref{thm:prufer}, and Section~\ref{sec:spectral} proves Theorem~\ref{thm:spectral}. Section~\ref{sec:numerical} compares two numerical methods for computing the eigenvalues, regular shooting and Pr\"ufer-based shooting, on three families of grids.
\section{Preliminaries}
\label{sec:prelim}
In this section we collect the discrete-calculus tools needed in the rest of the paper and use them to build the operator framework behind the eigenvalue problem \eqref{eq:main}. The eventual goal is to show that the operator associated with \eqref{eq:main} is self-adjoint with respect to a suitable inner product, since self-adjointness is what forces the eigenvalues to be real in Section~\ref{sec:spectral}. We begin with two known identities from discrete calculus, the product rule and a summation-by-parts formula, taken from \cite{Book KP}; these are the algebraic backbone of everything that follows.

\begin{pro}
Let \(f_k,g_k: \N \to \R\), then 
\begin{equation}
\label{pro: product rule}
\displaystyle \Delta_h \left(f_k g_k\right)=g_{k+1} \Delta_h f_k+f_k\Delta_h g_k  = f_{k+1} \Delta_h g_k+g_k\Delta_h f_k  = \displaystyle \Delta_h \left(g_k f_k\right)
\end{equation}
\end{pro}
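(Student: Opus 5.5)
Both identities follow from expanding $f_{k+1}g_{k+1}-f_kg_k$, adding and subtracting one mixed term, and then dividing by $h_k$. The nonuniform step size causes no difficulty. The operator $\Delta_h$ defined in \eqref{def:2.2} divides by the single quantity $h_k=t_{k+1}-t_k$, and every difference appearing in \eqref{pro: product rule} is evaluated at the same index $k$. So all terms share this one denominator, and the argument is identical to the uniform-grid case.

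\textbf{First equality.} I will add and subtract $f_kg_{k+1}$ in the numerator:
\[
f_{k+1}g_{k+1}-f_kg_k=(f_{k+1}-f_k)\,g_{k+1}+f_k\,(g_{k+1}-g_k).
\]
Dividing by $h_k>0$ gives $\Delta_h(f_kg_k)=g_{k+1}\Delta_h f_k+f_k\Delta_h g_k$.

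\textbf{Second equality.} Here I will instead add and subtract $f_{k+1}g_k$, which gives
\[
f_{k+1}g_{k+1}-f_kg_k=f_{k+1}(g_{k+1}-g_k)+g_k(f_{k+1}-f_k).
\]
Dividing by $h_k$ gives $f_{k+1}\Delta_h g_k+g_k\Delta_h f_k$.

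\textbf{Last equality.} The identity $\Delta_h(f_kg_k)=\Delta_h(g_kf_k)$ holds simply because multiplication of real numbers is commutative, so the two sequences $f_kg_k$ and $g_kf_k$ are identical.

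There is no real obstacle in this proof. The only point needing care is the one noted at the outset: all differences involved are taken over the same interval $[t_k,t_{k+1}]$ with the same step $h_k$, so no mismatched step sizes arise.
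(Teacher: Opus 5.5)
Your proof is correct, and it is the standard add-and-subtract computation. The paper gives no argument of its own and instead cites Kelley--Peterson (Thm.~2.1), which proves this same computation for the unit-step difference operator; your remark that every term shares the single denominator $h_k$ is exactly what justifies carrying that result over to the non-uniform $\Delta_h$.
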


\begin{proof}
For a detailed proof, we refer the reader to \cite[Pg. 15, Thm 2.1]{Book KP}.
\end{proof}

The next proposition is the discrete analogue of the fundamental theorem of calculus for the operator $\Delta_h$, and it supplies the boundary terms that later vanish under the Dirichlet conditions.

\begin{pro}
\label{pro:summation}
Let $\{a_k\}$ and $\{b_k\}$ be sequences of real numbers and $h_k$ defined as in \eqref{def:2.2}, then
\[
\sum_{k=0}^{n}h_k\Delta_h(a_kb_k) = a_{n+1}b_{n+1} - a_0b_0.
\]
\end{pro}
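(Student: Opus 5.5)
The identity in Proposition~\ref{pro:summation} is a telescoping sum once the factor $h_k$ is cancelled against the denominator in the definition of $\Delta_h$. The plan is to apply \eqref{def:2.2} to the single sequence $c_k = a_k b_k$, observe that each summand collapses to a first difference, and then telescope. Neither the product rule \eqref{pro: product rule} nor any property of the grid beyond $h_k > 0$ will be needed.

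\textbf{Step 1.} Since $h_k = t_{k+1} - t_k > 0$ for every $k$, we have $h_k \neq 0$. By \eqref{def:2.2},
\[
h_k \Delta_h(a_k b_k) = h_k \cdot \frac{a_{k+1}b_{k+1} - a_k b_k}{h_k} = a_{k+1}b_{k+1} - a_k b_k
\]
for each $k \in \{0,\ldots,n\}$. Note that this uses $a_{n+1}$ and $b_{n+1}$, so the sequences (and the grid point $t_{n+1}$) must be defined up to index $n+1$. This is implicit in the statement.

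\textbf{Step 2.} Sum over $k = 0,\ldots,n$. Writing $c_k = a_k b_k$, the sum becomes $\sum_{k=0}^{n}(c_{k+1} - c_k)$. Every intermediate term $c_1,\ldots,c_n$ appears once with a plus sign and once with a minus sign, so only $c_{n+1} - c_0$ remains. A one-line induction on $n$ makes this cancellation rigorous:
\begin{itemize}
\item the base case $n = 0$ is just Step~1;
\item for the inductive step, adding the term $c_{n+2} - c_{n+1}$ to $c_{n+1} - c_0$ gives $c_{n+2} - c_0$.
\end{itemize}
This yields $a_{n+1}b_{n+1} - a_0 b_0$, as claimed.

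There is no real obstacle here. The only point requiring care is that the nonuniform weights $h_k$ cancel exactly term by term. This is precisely why the sum is weighted by $h_k$: it is the discrete analogue of $\int (ab)'\,dt$. The grid spacing therefore plays no role in the final boundary terms, which is what lets them vanish later under the Dirichlet conditions $x_0 = x_{n+1} = 0$.
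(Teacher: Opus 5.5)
Your proof is correct and follows essentially the same route as the paper. The paper gives no argument of its own and only cites the textbook's fundamental theorem for sums (Kelley--Peterson, Thm.~2.6), which is proved by exactly this telescoping; your observation that $h_k\Delta_h(a_kb_k)=a_{k+1}b_{k+1}-a_kb_k$ is the one step needed to carry that unit-step result over to the non-uniform grid, and you are right that the sum uses $h_n=t_{n+1}-t_n$, so everything must be defined through index $n+1$.
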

\begin{proof}
For a detailed proof, we refer the reader to \cite[Pg. 24, Thm 2.6(c,d)]{Book KP}.
\end{proof}

Combining the product rule with Proposition~\ref{pro:summation} yields the summation-by-parts identity below, which is the discrete counterpart of integration by parts and the key step in the self-adjointness proof.

\begin{cor}
\label{cor:summation}
Let $\{a_k\}$ and $\{b_k\}$ be sequences of real numbers, with $h_k$ defined as in \eqref{def:2.2}. If $b_0 = b_{n+1} = 0$, then
\[
\sum_{k=0}^{n} h_k\, b_{k+1}\, \Delta_h a_k = -\sum_{k=0}^{n} h_k\, a_k\, \Delta_h b_k.
\]
\end{cor}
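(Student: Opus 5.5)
We apply the product rule \eqref{pro: product rule} to the product $a_k b_k$, sum the resulting identity against the weights $h_k$, and then use Proposition~\ref{pro:summation} to evaluate the left-hand side as a telescoped boundary term that vanishes.

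Concretely, the first form of \eqref{pro: product rule}, with $f_k = a_k$ and $g_k = b_k$, gives
\[
\Delta_h(a_k b_k) = b_{k+1}\,\Delta_h a_k + a_k\,\Delta_h b_k .
\]
We multiply by $h_k$ and sum over $k=0,\ldots,n$. This yields
\[
\sum_{k=0}^{n} h_k\,\Delta_h(a_k b_k) = \sum_{k=0}^{n} h_k\, b_{k+1}\,\Delta_h a_k + \sum_{k=0}^{n} h_k\, a_k\,\Delta_h b_k .
\]
By Proposition~\ref{pro:summation}, the left-hand side equals $a_{n+1}b_{n+1} - a_0 b_0$. This is zero because $b_0 = b_{n+1} = 0$. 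Rearranging gives the claimed identity.

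No real obstacle is expected. The only care needed is to choose the version of the product rule in which the shifted index falls on $b$ (namely $b_{k+1}\Delta_h a_k$) rather than on $a$, so that the sum matches the left-hand side of the corollary exactly. One should also note that Proposition~\ref{pro:summation} applies for any real sequences. If one prefers not to rely on it, a direct check suffices: $h_k\Delta_h(a_kb_k) = a_{k+1}b_{k+1}-a_kb_k$, so the sum telescopes. Note that the $h_k$ cancel against the $1/h_k$ in $\Delta_h$, so non-uniformity of the grid causes no difficulty.
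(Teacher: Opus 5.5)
Your proposal is correct and follows essentially the same route as the paper's proof. You apply the product rule in the form $\Delta_h(a_kb_k)=b_{k+1}\Delta_h a_k + a_k\Delta_h b_k$, multiply by $h_k$ and sum over $k=0,\dots,n$, and then use Proposition~\ref{pro:summation} together with $b_0=b_{n+1}=0$ to make the boundary term vanish.
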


\begin{proof}
By Proposition \ref{pro: product rule}, $\Delta_h(a_k b_k) = a_k\, \Delta_h b_k + b_{k+1}\, \Delta_h a_k.$ Multiplying both sides by $h_k$ and summing from $k = 0$ to $k=n$, we get $\displaystyle \sum_{k=0}^{n} h_k\, \Delta_h(a_k b_k) = \displaystyle \sum_{k=0}^{n} h_k\, a_k\, \Delta_h b_k + \sum_{k=0}^{n} h_k\, b_{k+1}\, \Delta_h a_k.$
By Proposition \ref{pro:summation} and using the fact $b_0 = b_{n+1} = 0$, we have $\displaystyle\sum_{k=0}^{n} h_k\, \Delta_h(a_k b_k) = a_{n+1}b_{n+1} - a_0 b_0=0$, yielding
\[
\sum_{k=0}^{n} h_k\, b_{k+1}\, \Delta_h a_k = -\sum_{k=0}^{n} h_k\, a_k\, \Delta_h b_k,
\]
as claimed.
\end{proof}

With these identities in hand, we introduce the inner product on sequences with respect to which self-adjointness is measured.

\begin{defi}
Let $u=\left\{u_k\right\}_{k=0}^{n+1}$ and $v=\left\{v_k\right\}_{k=0}^{n+1}$. We define the \emph{inner product} of $u$ and $v$ as
\begin{equation}
\label{eq:inner-product}
\left\langle u,v\right\rangle := \sum_{k=0}^{n} h_k\, u_{k+1} \overline{v_{k+1}}.
\end{equation}
\end{defi}

The following proposition records that \eqref{eq:inner-product} is a genuine inner product, so that the notions of adjoint and self-adjointness are well defined.

\begin{pro}
The inner product defined by \eqref{eq:inner-product} has the following properties.
\begin{description}
    \item[Symmetry] $\left\langle x,y\right\rangle=\overline{\left\langle y,x\right\rangle}$.
    \item[Linearity in the first argument] For all \(\alpha,\beta\in\mathbb{C}\), $\left\langle\alpha x+\beta z, y\right\rangle=\alpha\left\langle x,y\right\rangle+\beta\left\langle z,y\right\rangle$.
    \item[Positive Definiteness] $\left\langle x,x\right\rangle\geq0$, and $\left\langle x,x\right\rangle= 0$ if and only if $x_{k+1}=0$ for all $k \in \{0,1,\dots,n\}$.
\end{description}
\end{pro}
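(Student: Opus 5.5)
We verify each of the three properties directly from the definition \eqref{eq:inner-product}. Each is a finite sum whose weights $h_k = t_{k+1}-t_k$ are real and strictly positive, so every property reduces to the corresponding fact about complex numbers, applied term by term. Throughout, $x,y,z$ are complex sequences indexed by $k\in\{0,1,\dots,n+1\}$.

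\textbf{Symmetry.} We take the complex conjugate of $\langle y,x\rangle=\sum_{k=0}^n h_k\, y_{k+1}\overline{x_{k+1}}$. Conjugation is additive, and $\overline{h_k}=h_k$ because $h_k$ is real. Using $\overline{\overline{x_{k+1}}}=x_{k+1}$, this gives
\[
\overline{\langle y,x\rangle}=\sum_{k=0}^n h_k\,\overline{y_{k+1}}\,x_{k+1}=\langle x,y\rangle .
\]

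\textbf{Linearity in the first argument.} We expand $(\alpha x+\beta z)_{k+1}=\alpha x_{k+1}+\beta z_{k+1}$ inside the sum. Distributing $h_k\overline{y_{k+1}}$ and splitting the finite sum into two pieces yields $\alpha\langle x,y\rangle+\beta\langle z,y\rangle$. No conjugation of $\alpha$ or $\beta$ occurs, because they sit in the first slot.

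\textbf{Positive definiteness.} We write
\[
\langle x,x\rangle=\sum_{k=0}^n h_k\,|x_{k+1}|^2 .
\]
This is a sum of nonnegative terms, since $h_k>0$, so it is nonnegative. If the sum is zero, then each term $h_k|x_{k+1}|^2$ must vanish, and dividing by $h_k>0$ gives $x_{k+1}=0$ for every $k\in\{0,\dots,n\}$. Conversely, if all $x_{k+1}$ vanish, the sum is clearly zero.

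The only point needing care is this last step, where strict positivity $h_k>0$ (from the grid assumption in \eqref{def:2.2}) is what lets a vanishing sum force each $x_{k+1}$ to vanish. The property characterizes only $x_1,\dots,x_{n+1}$; the entry $x_0$ is unconstrained. On the Dirichlet space where $x_0=0$, as in \eqref{eq:main}, this makes \eqref{eq:inner-product} a genuine inner product.
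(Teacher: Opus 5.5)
Your proposal is correct and is the direct term-by-term verification the paper has in mind: it calls these properties ``easily verifiable'' and refers to the cited reference without writing out a proof. You correctly identify that the reality of the weights $h_k$ gives symmetry, and that their strict positivity $h_k>0$ forces each $x_{k+1}$ to vanish in the definiteness step.
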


These properties are easily verifiable; we refer the reader to \cite[Pg. 282, Def. 7.4]{Book KP}. We now introduce the \emph{principal operator} associated with \eqref{eq:main}, which carries the second-order part of the equation, and prove that it is self-adjoint.

\begin{defi}
The \emph{principal operator} associated with \eqref{eq:main} is the linear operator $\mathcal{L}_0$ acting on sequences $\{x_k\}_{k=0}^{n+1}$ satisfying the Dirichlet boundary conditions $x_0 = x_{n+1} = 0$, defined by
\begin{equation}
    \label{def:op}
    (\mathcal{L}_0 x)_{k+1} = \Delta_h(r_k \Delta_h x_k).
\end{equation}
\end{defi}
\begin{lem}
\label{lem:opL0}
The principal operator $\mathcal{L}_0$ defined in \eqref{def:op} is self-adjoint with respect to the inner product \eqref{eq:inner-product}; that is,
\[
\langle \mathcal{L}_0 x, y \rangle = \langle x, \mathcal{L}_0 y \rangle
\]
for all sequences $x, y$ satisfying the Dirichlet boundary conditions.
\end{lem}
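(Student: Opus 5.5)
The plan is to apply summation by parts (Corollary~\ref{cor:summation}) twice, moving the difference operator from $x$ to $y$ one step at a time. This should produce the symmetric expression $-\sum_{k=0}^{n} h_k r_k \Delta_h x_k\, \overline{\Delta_h y_k}$, from which $\langle x,\mathcal{L}_0 y\rangle$ follows by conjugate symmetry. Since the corollary is stated for real sequences, I would first treat real $x,y$. For complex sequences I would split them into real and imaginary parts; this works because $r_k$ and $h_k$ are real, so $\mathcal{L}_0$ commutes with complex conjugation.

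\textbf{Index convention.} By \eqref{def:op} and \eqref{eq:inner-product}, the operator is only evaluated at indices $k+1$ with $k=0,\dots,n$. Hence
\[
\langle \mathcal{L}_0 x, y\rangle=\sum_{k=0}^{n} h_k\, \overline{y_{k+1}}\,\Delta_h(r_k\Delta_h x_k).
\]
To make sense of this for $k=n$, I would extend $r$ to an index $n+1$ arbitrarily. The term $r_{n+1}\Delta_h x_{n+1}$ would involve $x_{n+2}$, so I take $r_{n+1}\Delta_h x_{n+1}$ to be some fixed extension. Any choice works because it is multiplied by the boundary value $y_{n+1}=0$. Pinning down this convention is the main obstacle: one has to check that the term $k=n$ contributes nothing ambiguous.

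\textbf{First summation by parts.} Apply Corollary~\ref{cor:summation} with $a_k=r_k\Delta_h x_k$ and $b_k=\overline{y_k}$. This is allowed because $b_0=b_{n+1}=0$ by the Dirichlet conditions on $y$. It gives
\[
\langle \mathcal{L}_0 x,y\rangle=-\sum_{k=0}^{n} h_k\, r_k\,\Delta_h x_k\,\overline{\Delta_h y_k},
\]
where the $k=n$ term is just $h_n r_n \Delta_h x_n\,\overline{\Delta_h y_n}$. This expression involves only values up to index $n+1$, so it does not depend on the extension chosen above. The right-hand side is symmetric in $x$ and $y$ up to complex conjugation, since $r_k$ and $h_k$ are real.

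\textbf{Second step.} Running the same argument with the roles of $x$ and $y$ exchanged (now using $x_0=x_{n+1}=0$) gives
\[
\langle \mathcal{L}_0 y,x\rangle=-\sum_{k=0}^{n} h_k\, r_k\,\Delta_h y_k\,\overline{\Delta_h x_k}.
\]
Taking complex conjugates and using the symmetry property of the inner product,
\[
\langle x,\mathcal{L}_0 y\rangle=\overline{\langle \mathcal{L}_0 y,x\rangle}=-\sum_{k=0}^{n} h_k\, r_k\,\Delta_h x_k\,\overline{\Delta_h y_k},
\]
which equals $\langle \mathcal{L}_0 x,y\rangle$ and proves the claim.

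Beyond the extension issue at $k=n$, the only care needed is to verify that the product-rule form of Corollary~\ref{cor:summation} puts the shift on $b$ (namely $b_{k+1}$). This is exactly what matches the weight $\overline{y_{k+1}}$ appearing in the inner product.
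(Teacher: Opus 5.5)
Your proposal is correct and follows essentially the paper's route: one application of Corollary~\ref{cor:summation} with $a_k=r_k\Delta_h x_k$, $b_k=\overline{y_k}$, and the same with $x$ and $y$ swapped, which reduces both sides to $-\sum_{k=0}^{n} h_k r_k\,\Delta_h x_k\,\overline{\Delta_h y_k}$. You are slightly more careful than the paper in two places: the paper writes $\langle x,\mathcal{L}_0 y\rangle$ as $\sum h_k\Delta_h(r_k\Delta_h y_k)\overline{x_{k+1}}$, which is really $\langle \mathcal{L}_0 y,x\rangle$, and you instead pass to $\langle x,\mathcal{L}_0 y\rangle$ by conjugate symmetry; the paper also leaves implicit the undefined $k=n$ term, which you correctly note is annihilated by $y_{n+1}=0$.
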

\begin{proof}
Observe that $\langle \mathcal{L}_0x,y \rangle = \displaystyle\sum_{k=0}^{n}h_k\Delta_h(r_k \Delta_h x_{k})\overline{y_{k+1}}$ and $\langle x,\mathcal{L}_0y \rangle = \displaystyle\sum_{k=0}^{n}h_k\Delta_h(r_k \Delta_h y_{k})\overline{x_{k+1}}$. Let 
\begin{align*}
a_k &= r_k\Delta_h x_k,  \qquad \widetilde{a}_k = r_k\Delta_h y_k,\\
b_k &= \overline{y_k}, \qquad \widetilde{b}_k=\overline{x_k}.
\end{align*}
By employing Corollary \ref{cor:summation}, we obtain
\begin{align*}
    \displaystyle\sum_{k=0}^{n}h_k\Delta_h(r_k\Delta_hx_k)\overline{y_{k+1}} &= -\displaystyle\sum_{k=0}^{n}h_k(r_k\Delta_hx_k)\Delta_h\overline{y_k},\\
     \displaystyle\sum_{k=0}^{n}h_k\Delta_h(r_k\Delta_hy_k)\overline{x_{k+1}} &= -\displaystyle\sum_{k=0}^{n}h_k(r_k\Delta_hy_k)\Delta_h\overline{x_k},
\end{align*}
which imply $\displaystyle\sum_{k=0}^{n}h_k\Delta_h(r_k \Delta_h x_{k})\overline{y_{k+1}}-\displaystyle\sum_{k=0}^{n}h_k\Delta_h(r_k \Delta_h y_{k})\overline{x_{k+1}}=0$, using that $r_k$ is real. Therefore, we conclude $\langle \mathcal{L}_0x,y \rangle= \langle x,\mathcal{L}_0y \rangle$.
\end{proof}

The lower-order potential term is collected into a multiplication operator, after which the full operator behind \eqref{eq:main} is assembled and shown to inherit self-adjointness.

\begin{lem}
\label{lem:opA}
Let $(Qx)_{k+1} := q_kx_{k+1}$, and define $(\mathcal{A}x)_{k+1} = (Qx)_{k+1}-(\mathcal{L}_0x)_{k+1}$. Then $\mathcal{A}$ is self-adjoint.
\end{lem}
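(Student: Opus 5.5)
The plan is to reduce Lemma~\ref{lem:opA} to Lemma~\ref{lem:opL0} by linearity of the inner product. The operator $\mathcal{A} = Q - \mathcal{L}_0$ is a difference of two operators. Lemma~\ref{lem:opL0} already gives self-adjointness of $\mathcal{L}_0$, so it suffices to show that the multiplication operator $Q$ is self-adjoint with respect to \eqref{eq:inner-product}. We then combine the two using linearity in the first argument and conjugate symmetry. The latter gives linearity in the second argument up to conjugation of scalars, but here no scalars other than $\pm 1$ appear.

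\textbf{Step 1: $Q$ is self-adjoint.} For sequences $x,y$ satisfying the Dirichlet conditions, I would write out
\[
\langle Qx, y\rangle = \sum_{k=0}^{n} h_k\, q_k x_{k+1}\overline{y_{k+1}}, \qquad \langle x, Qy\rangle = \sum_{k=0}^{n} h_k\, x_{k+1}\overline{q_k y_{k+1}} = \sum_{k=0}^{n} h_k\, x_{k+1}\overline{q_k}\,\overline{y_{k+1}}.
\]
Since $q_k$ is real, $\overline{q_k} = q_k$, and the two sums agree term by term. No summation by parts is needed here, because $Q$ acts pointwise and the inner product only involves the indices $k+1 \in \{1,\dots,n+1\}$, which is exactly where $(Qx)_{k+1}$ is defined.

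\textbf{Step 2: Combine.} Using linearity in the first argument,
\[
\langle \mathcal{A}x, y\rangle = \langle Qx, y\rangle - \langle \mathcal{L}_0 x, y\rangle.
\]
Applying Step 1 and Lemma~\ref{lem:opL0} turns the right-hand side into $\langle x, Qy\rangle - \langle x, \mathcal{L}_0 y\rangle$. Using symmetry together with linearity in the first argument, this equals $\langle x, Qy - \mathcal{L}_0 y\rangle = \langle x, \mathcal{A}y\rangle$. This uses that $\langle x, u - v\rangle = \overline{\langle u - v, x\rangle} = \overline{\langle u,x\rangle} - \overline{\langle v,x\rangle} = \langle x,u\rangle - \langle x,v\rangle$.

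\textbf{Main obstacle.} There is no real analytic difficulty; the only point requiring care is bookkeeping. The operators are indexed by $(\cdot)_{k+1}$, so I must check that $(Qx)_{k+1}$ and $(\mathcal{L}_0x)_{k+1}$ are defined on the same index range $k=0,\dots,n$ that enters \eqref{eq:inner-product}. I also need to confirm that the domain, namely sequences with $x_0=x_{n+1}=0$, is the same one on which Lemma~\ref{lem:opL0} was proved. Once that is confirmed, the argument is just the two displayed computations.
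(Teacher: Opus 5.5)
Your proposal is correct and follows essentially the same route as the paper: show $\langle Qx,y\rangle=\langle x,Qy\rangle$ term by term using that $q_k$ is real, then combine this with Lemma~\ref{lem:opL0} by linearity to get $\langle \mathcal{A}x,y\rangle=\langle x,\mathcal{A}y\rangle$. Your explicit justification of additivity in the second argument via conjugate symmetry is a small piece of care that the paper's chain of equalities leaves implicit.
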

\begin{proof}
Observe that, since $q_k$ is real,
\[
\langle Qx, y \rangle = \sum_{k=0}^{n} h_k(q_kx_{k+1})\overline{y_{k+1}} = \sum_{k=0}^{n} h_k x_{k+1}\overline{(q_ky_{k+1})}= \langle x, Qy\rangle.
\]
By Lemma \ref{lem:opL0}, we have $\langle \mathcal{L}_0x,y\rangle = \langle x, \mathcal{L}_0y\rangle$. Therefore,
\begin{align*}
        \langle \mathcal{A}x, y\rangle= \langle (-\mathcal{L}_0 + Q)x, y\rangle= \langle -\mathcal{L}_0x, y\rangle + \langle Qx, y \rangle = \langle x, -\mathcal{L}_0y\rangle + \langle x, Qy\rangle &= \langle x, (-\mathcal{L}_0+Q)y\rangle\notag \\
        &=\langle x, \mathcal{A}y\rangle,
    \end{align*}
i.e., $\mathcal{A}$ is self-adjoint.
\end{proof}


\section{Proof of Theorem \ref{thm:prufer}}
\label{sec:Prufer}
In this section, we prove Theorem \ref{thm:prufer}, establishing the equivalence between nontrivial solutions of \eqref{eq:main} and solutions of the Pr\"ufer system.

We first prove the forward direction. Suppose $x$ is a nontrivial solution of \eqref{eq:main}. We begin by applying Proposition \ref{pro: product rule} to \eqref{lambda eq:Prufer A} which yields
\begin{equation}
\Delta_h x_k
= \sin\theta_{k+1}\,\Delta_h\mu_k + \mu_k\,\Delta_h\sin\theta_k, \notag
\end{equation}
and using \eqref{lambda eq:Prufer B}, we obtain
\begin{equation}
\frac{\mu_k\cos\theta_k}{r_k}
= \sin\theta_{k+1}\,\Delta_h\mu_k + \mu_k\,\Delta_h\sin\theta_k .\label{lambda eqA}
\end{equation}
Next, applying Proposition \ref{pro: product rule} to \eqref{lambda eq:Prufer B} yields, 
\begin{equation}
\Delta_h\bigl(r_k\,\Delta_h x_k\bigr)
= \cos\theta_{k+1}\,\Delta_h\mu_k + \mu_k\,\Delta_h\cos\theta_k . \notag
\end{equation}
Using \eqref{eq:main}, we obtain,
\begin{equation}
-\frac{\left(\lambda w_k-q_k\right)\, h_k\, \mu_k}{r_k}\cos\theta_k
- \left(\lambda w_k-q_k\right)\,\mu_k\sin\theta_k
= \cos\theta_{k+1}\,\Delta_h\mu_k + \mu_k\,\Delta_h\cos\theta_k .\label{lambda eqB}
\end{equation}
Multiply
\eqref{lambda eqA} by $\sin\theta_{k+1}$ and \eqref{lambda eqB} by $\cos\theta_{k+1}$, which yields
\begin{align}
(\sin\theta_{k+1})\bigg(\frac{\mu_k\cos\theta_k}{r_k}
&= \sin\theta_{k+1}\,\Delta_h\mu_k + \mu_k\,\Delta_h\sin\theta_k\bigg) \notag \\ 
\Rightarrow \sin^2\theta_{k+1}\,\Delta_h\mu_k
+ \mu_k\sin\theta_{k+1}\,\Delta_h\sin\theta_k
&= \frac{\mu_k}{r_k}\sin\theta_{k+1}\cos\theta_k,
\label{lambda eq:3.8}
\end{align}
\begin{align}
(\cos\theta_{k+1})\bigg(-\frac{\left(\lambda w_k-q_k\right)\, h_k}{r_k}\,\mu_k\cos\theta_k
- \left(\lambda w_k-q_k\right)\,\mu_k\sin\theta_k
&= \cos\theta_{k+1}\,\Delta_h\mu_k + \mu_k\,\Delta_h\cos\theta_k\bigg) \notag \\
\Rightarrow \cos^2\theta_{k+1}\,\Delta_h\mu_k
+ \mu_k\cos\theta_{k+1}\,\Delta_h\cos\theta_k
&= -\frac{\left(\lambda w_k-q_k\right)\, h_k}{r_k}\,\mu_k\cos\theta_{k+1}\cos\theta_k \notag
\\ &\hspace{0.75cm}- \left(\lambda w_k-q_k\right)\,\mu_k\cos\theta_{k+1}\sin\theta_k .
\label{lambda eq:3.9}
\end{align}

Adding \eqref{lambda eq:3.8} and \eqref{lambda eq:3.9},
\begin{align*}
\text{LHS}
&= \sin^2\theta_{k+1}\,\Delta_h\mu_k
+ \mu_k\sin\theta_{k+1}\,\Delta_h\sin\theta_k
+\cos^2\theta_{k+1}\,\Delta_h\mu_k
+ \mu_k\cos\theta_{k+1}\,\Delta_h\cos\theta_k\\
&= \Delta_h\mu_k\bigl(\sin^2\theta_{k+1} + \cos^2\theta_{k+1}\bigr)
+ \mu_k\bigl(\sin\theta_{k+1}\,\Delta_h\sin\theta_k
+ \cos\theta_{k+1}\,\Delta_h\cos\theta_k\bigr) \notag\\
&= \Delta_h\mu_k
+ \mu_k\bigl(\sin\theta_{k+1}\,\Delta_h\sin\theta_k
+ \cos\theta_{k+1}\,\Delta_h\cos\theta_k\bigr).
\end{align*}
Observe that,
\begin{equation}
\label{lambda eq:trig-identity}
\sin\theta_{k+1}\,\Delta_h\sin\theta_k
+ \cos\theta_{k+1}\,\Delta_h\cos\theta_k
= \frac{h_k\bigl[(\Delta_h\sin\theta_k)^2 + (\Delta_h\cos\theta_k)^2\bigr]}{2}
\end{equation}
Therefore,
\begin{align}
\label{lambda eq:LHS-phase}
\text{LHS} &= \Delta_h\mu_k + \mu_k\Bigg(\frac{h_k\bigl[(\Delta_h\sin\theta_k)^2 + (\Delta_h\cos\theta_k)^2\bigr]}{2}\Bigg)
\end{align}
\begin{align}
\label{lambda eq:RHS-phase}
\text{RHS} &= \frac{\mu_k}{r_k}\sin\theta_{k+1}\cos\theta_k-\frac{\left(\lambda w_k-q_k\right)\, h_k\, \mu_k}{r_k}\cos\theta_{k+1}\cos\theta_{k}
- \left(\lambda w_k-q_k\right)\,\mu_k\cos\theta_{k+1}\sin\theta_k.
\end{align}
Combining \eqref{lambda eq:RHS-phase} and \eqref{lambda eq:LHS-phase} together and isolating $\Delta_h\mu_k$, we obtain 
\begin{align}
\Delta_h\mu_k = -\mu_k\Bigg(\frac{h_k\bigl[(\Delta_h\sin\theta_k)^2 + (\Delta_h\cos\theta_k)^2\bigr]}{2}\Bigg)+\mu_k\left(\frac{1}{r_k}\sin\theta_{k+1}\cos\theta_k\right)\notag\\
+
\mu_k\left(\frac{-\left(\lambda w_k-q_k\right)\, h_k}{r_k}\cos\theta_{k+1}\cos\theta_{k}\right) \notag \\
+\mu_k\left(- \left(\lambda w_k-q_k\right)\cos\theta_{k+1}\sin\theta_k\right),
\end{align}
which in turn yields \eqref{lambda eq:thmA}, as desired.

Next, to obtain \eqref{lambda eq:thmB}, we multiply \eqref{lambda eqA} by $\cos\theta_{k+1}$ and
\eqref{lambda eqB} by $-\sin\theta_{k+1}$ and add.
\begin{align}
    (\cos\theta_{k+1})\bigg(\frac{\mu_k\cos\theta_k}{r_k}
&= \sin\theta_{k+1}\,\Delta_h\mu_k + \mu_k\,\Delta_h\sin\theta_k\bigg)\\
\Rightarrow \frac{\mu_k}{r_k}\cos\theta_{k+1}\cos\theta_k
&= \cos\theta_{k+1}\sin\theta_{k+1}\,\Delta_h\mu_k
+ \mu_k\cos\theta_{k+1}\,\Delta_h\sin\theta_k .
\label{lambda eq:3.16}
\end{align}

\begin{align}
    (-\sin\theta_{k+1})\bigg(-\frac{\left(\lambda w_k-q_k\right)\, h_k}{r_k}\,\mu_k\cos\theta_k
- \left(\lambda w_k-q_k\right)\,\mu_k\sin\theta_k
&= \cos\theta_{k+1}\,\Delta_h\mu_k + \mu_k\,\Delta_h\cos\theta_k\bigg)\\
\Rightarrow -\sin\theta_{k+1}\cos\theta_{k+1}\,\Delta_h\mu_k
- \mu_k\sin\theta_{k+1}\,\Delta_h\cos\theta_k
&=h_k\,\frac{\left(\lambda w_k-q_k\right)}{r_k}\,\mu_k\cos\theta_k\sin\theta_{k+1} \notag
\\ &\hspace{0.75cm} + \left(\lambda w_k-q_k\right)\,\mu_k\sin\theta_{k+1}\sin\theta_k .
\label{lambda eq:3.17}
\end{align}
Adding \eqref{lambda eq:3.16} and \eqref{lambda eq:3.17}, we obtain, 
\begin{align}
\text{LHS} &= \cos\theta_{k+1}\sin\theta_{k+1}\,\Delta_h\mu_k
+ \mu_k\cos\theta_{k+1}\,\Delta_h\sin\theta_k
- \sin\theta_{k+1}\cos\theta_{k+1}\,\Delta_h\mu_k
- \mu_k\sin\theta_{k+1}\,\Delta_h\cos\theta_k \notag\\
&= \Delta_h\mu_k\bigl(\cos\theta_{k+1}\sin\theta_{k+1}
- \sin\theta_{k+1}\cos\theta_{k+1}\bigr) -\mu_k\bigl(-\cos\theta_{k+1}\,\Delta_h\sin\theta_k
+ \sin\theta_{k+1}\,\Delta_h\cos\theta_k\bigr) \notag\\ 
&= 0 - \mu_k\bigg(-\frac{\cos\theta_{k+1}(\sin\theta_{k+1} - \sin\theta_k)}{h_k}+\frac{\sin\theta_{k+1}(\cos\theta_{k+1}-\cos\theta_k)}{h_k}\bigg) \notag  \\
&=\frac{-\mu_k}{h_k}(-\cos\theta_{k+1}\sin\theta_{k+1} + \cos\theta_{k+1}\sin\theta_k + \sin\theta_{k+1}\cos\theta_{k+1} - \sin\theta_{k+1}\cos\theta_k) \notag\\
&=\frac{\mu_k }{h_k}\sin(h_k\Delta_h\theta_k),
\label{lambda eq:LHS-amp}
\end{align}
and
\begin{align}
\text{RHS}&=\frac{\mu_k}{r_k}\cos\theta_k\cos\theta_{k+1}
+ \frac{h_k\,\left(\lambda w_k-q_k\right)}{r_k}\,\mu_k\cos\theta_k\sin\theta_{k+1}
+ \left(\lambda w_k-q_k\right)\,\mu_k\sin\theta_k\sin\theta_{k+1} \notag\\
&=\mu_k\bigg(\frac{1}{r_k}\cos\theta_k\cos\theta_{k+1}
+ \frac{h_k\,\left(\lambda w_k-q_k\right)}{r_k}\,\cos\theta_k\sin\theta_{k+1}
+ \left(\lambda w_k-q_k\right)\,\sin\theta_k\sin\theta_{k+1}\bigg)
\label{lambda eq:RHS-amp}
\end{align}
Combining \eqref{lambda eq:LHS-amp} and \eqref{lambda eq:RHS-amp} we get,
\begin{equation*}
\frac{\mu_k}{h_k}\sin\bigl(h_k\,\Delta_h\theta_k\bigr)
=\frac{\mu_k}{r_k}\Bigl(\cos\theta_k\cos\theta_{k+1}
+ h_k\,\left(\lambda w_k-q_k\right)\cos\theta_k\sin\theta_{k+1}
+ \left(\lambda w_k-q_k\right)r_k\sin\theta_k\sin\theta_{k+1}\Bigr)
\end{equation*}
which in turn yields \eqref{lambda eq:thmB}. This establishes the forward direction.

Observe that, the two multiplications performed above can be recorded compactly. The linear combinations that produce \eqref{lambda eq:thmA} and \eqref{lambda eq:thmB} from \eqref{lambda eqA} and \eqref{lambda eqB} are
\[
\begin{pmatrix} \eqref{lambda eq:thmA} \\ \eqref{lambda eq:thmB} \end{pmatrix} = M \begin{pmatrix} \eqref{lambda eqA} \\ \eqref{lambda eqB}\end{pmatrix}, \qquad M = \begin{pmatrix} \sin\theta_{k+1} & \cos\theta_{k+1} \\ \cos\theta_{k+1} & -\sin\theta_{k+1} \end{pmatrix}.
\]
Since $M^{-1} = M$, the transformation is an involution, and therefore
\[
 \begin{pmatrix} \eqref{lambda eqA} \\ \eqref{lambda eqB}\end{pmatrix}= M \begin{pmatrix} \eqref{lambda eq:thmA} \\ \eqref{lambda eq:thmB} \end{pmatrix}.
\]

We now prove the converse. Suppose $\mu_k$ and $\theta_k$ satisfy \eqref{lambda eq:thmA} and \eqref{lambda eq:thmB}. Notice that \eqref{lambda eq:thmA} can be rewritten as
\begin{align}
\label{lambda eq:re-thmA}
\Delta_h \mu_k + \mu_k \bigl[ \sin\theta_{k+1} \Delta_h \sin\theta_k + \cos\theta_{k+1} \Delta_h \cos\theta_k \bigr] \notag \\ = \frac{\mu_k}{r_k} \sin\theta_{k+1} \cos\theta_k - \frac{\left(\lambda w_k-q_k\right) h_k \mu_k}{r_k} \cos\theta_{k+1} \cos\theta_k &- \left(\lambda w_k-q_k\right) \mu_k \cos\theta_{k+1} \sin\theta_k,
\end{align}
and \eqref{lambda eq:thmB} can be rewritten as 
\begin{align}
\label{lambda eq:re-thmB}
\mu_k \bigl[ \cos\theta_{k+1} \Delta_h \sin\theta_k - \sin\theta_{k+1} \Delta_h \cos\theta_k \bigr] \notag \\= \frac{\mu_k}{r_k} \cos\theta_{k+1} \cos\theta_k + &\frac{\left(\lambda w_k-q_k\right) h_k \mu_k}{r_k} \sin\theta_{k+1} \cos\theta_k + \left(\lambda w_k-q_k\right) \mu_k \sin\theta_{k+1} \sin\theta_k.
\end{align}
Therefore, via the involution identity above, we obtain  \(\eqref{lambda eqB}=\cos\theta_{k+1}\eqref{lambda eq:thmA}-\sin\theta_{k+1}\eqref{lambda eq:thmB}\).
For simplicity, we will consider the left and right hand sides separately.
\begin{align}
\notag \text{LHS}&=\cos\theta_{k+1}(\text{LHS of } \eqref{lambda eq:thmA})-\sin\theta_{k+1}\left(\text{LHS of }\eqref{lambda eq:thmB}\right)\notag\\
&=\cos\theta_{k+1}\Delta_h\mu_k+\mu_{k}\left(\cos\theta_{k+1}\sin\theta_{k+1}\Delta_h\sin\theta_k+\cos^2\theta_{k+1}\Delta_h\cos\theta_k\right) \notag \\ 
&\hspace{3.7cm}-\mu_k\left(\cos\theta_{k+1}\sin\theta_{k+1}\Delta_h\sin\theta_k-\sin^2\theta_{k+1}\Delta_h\cos\theta_k\right) \notag \\ 
&=\cos\theta_{k+1}\Delta_h\mu_k+\mu_k\cos^2\theta_{k+1}\Delta_h\cos\theta_k+\mu_k\sin^2\theta_{k+1}\Delta_h\cos\theta_k \notag \\ 
    & =\cos\theta_{k+1}\Delta_h\mu_k+\mu_k\Delta_h\cos\theta_k 
    \label{lambda eq:LHS-4.1}
\end{align}

\begin{align}
    \text{RHS}&=\cos\theta_{k+1}(\text{RHS of } \eqref{lambda eq:thmA})-\sin\theta_{k+1}\left(\text{RHS of }\eqref{lambda eq:thmB}\right)\notag\\
    &=\frac{\mu_k}{r_k}\sin\theta_{k+1}\cos\theta_{k+1}\cos\theta_k-\frac{\left(\lambda w_k-q_k\right)\,h_k\,\mu_k}{r_k}\cos^2\theta_{k+1}\cos\theta_k-\left(\lambda w_k-q_k\right)\mu_k\cos^2\theta_{k+1}\sin\theta_k\notag\\
    &\hspace{1cm}-\frac{\mu_k}{r_k}\cos\theta_{k+1}\sin\theta_{k+1}\cos\theta_k-\frac{\left(\lambda w_k-q_k\right)\,h_k\,\mu_k}{r_k}\sin^2\theta_{k+1}\cos\theta_k\notag -\left(\lambda w_k-q_k\right)\mu_k\sin^2\theta_{k+1}\sin\theta_k \\ \notag
    &=\frac{\mu_k}{r_k}\sin\theta_{k+1}\cos\theta_{k+1}\cos\theta_k-\frac{\mu_k}{r_k}\sin\theta_{k+1}\cos\theta_{k+1}\cos\theta_k \\ \notag &\hspace{1cm}-\frac{\left(\lambda w_k-q_k\right)\,h_k\,\mu_k}{r_k}\cos\theta_k(\cos^2\theta_{k+1}+\sin^2\theta_{k+1}) -\left(\lambda w_k-q_k\right)\mu_k\sin\theta_k(\sin^2\theta_{k+1}+\cos^2\theta_{k+1}) \notag\\ 
    &=-\frac{\left(\lambda w_k-q_k\right)\,h_k\,\mu_k}{r_k}\cos\theta_k-\left(\lambda w_k-q_k\right)\mu_k\sin\theta_k
    \label{lambda eq:RHS-4.1}
\end{align}
Equating \eqref{lambda eq:LHS-4.1} and \eqref{lambda eq:RHS-4.1}, we have,
\begin{equation}
\cos\theta_{k+1}\Delta_h\mu_k+\mu_k\Delta_h\cos\theta_k=\frac{-\left(\lambda w_k-q_k\right)h_k\mu_k}{r_k}\cos\theta_k-\left(\lambda w_k-q_k\right)\mu_k\sin\theta_k \label{lambda eq:4.24}
\end{equation}
which yields \eqref{lambda eqB}. Observe that the left hand side of \eqref{lambda eq:4.24} is \(\Delta_h\left(\mu_k\cos\theta_{k}\right)\) via Proposition \ref{pro: product rule}. Using \eqref{lambda eq:Prufer B}, we obtain 
\begin{equation}
\label{lambda eq:LHS-DE}
\cos\theta_{k+1}\Delta_h\mu_k+\mu_k\Delta_h\cos\theta_k  = \Delta_h\left(\mu_k\cos\theta_{k}\right) =  \Delta_h\left(r_k\Delta_h x_k\right)  
\end{equation}
Now we take the RHS of \eqref{lambda eq:4.24} and use \eqref{lambda eq:Prufer A} - \eqref{lambda eq:Prufer B} which yields 
\begin{align}
\label{lambda eq:RHS-DE}
    -\frac{\left(\lambda w_k-q_k\right)\,h_k\,\mu_k}{r_k}\cos\theta_k-\left(\lambda w_k-q_k\right)\mu_k\sin\theta_k &= -\left(\lambda w_k-q_k\right)\left(\frac{h_k\,\mu_k}{r_k}\cos\theta_k+\mu_k\sin\theta_k\right) \notag \\
    & = -\left(\lambda w_k-q_k\right)\left(\frac{h_k}{r_k}r_k\Delta_h x_k+x_k\right)\notag \\
    & = -\left(\lambda w_k-q_k\right) \left(h_k\left(\frac{x_{k+1}-x_k}{h_k}\right)+x_k\right) \notag \\
    & = -\left(\lambda w_k-q_k\right) x_{k+1}.
\end{align}
Combining  \eqref{lambda eq:LHS-DE} and \eqref{lambda eq:RHS-DE}, we obtain \eqref{eq:main}. This concludes the proof. \hfill \qed

\begin{rem}
Notice that \eqref{lambda eq:thmB} is independent of $\mu_k$, so it determines
$\{\theta_k\}$ on its own. Once $\{\theta_k\}$ is known, \eqref{lambda eq:thmA} is
readily solved for $\mu_k$. Moreover, \eqref{lambda eq:thmB} advances the phase, one
index at a time, so the initial value $\theta_0$ generates the entire sequence
$\{\theta_k\}$ recursively.
\end{rem}
\section{Proof of Theorem \ref{thm:spectral}}
\label{sec:spectral}

In this section, we prove Theorem \ref{thm:spectral}. We first establish that every eigenvalue of \eqref{eq:main} is real. We begin by
rewriting \eqref{eq:main} in the operator form using $\mathcal{A}, Q, W$ as follows.
\begin{equation}
\label{eq:opSL}
\Delta_h(r_k\Delta_hx_k) + (\lambda w_k - q_k)x_{k+1}=0\Rightarrow (\mathcal{A}x)_{k+1}= \lambda(Wx)_{k+1}
\end{equation}
Note that $\lambda$ is an eigenvalue of \eqref{eq:main} if and only if $\lambda$ is an eigenvalue of \eqref{eq:opSL}. Moreover, 
\begin{align*}
& \langle Wx, y \rangle = \sum_{k=0}^{n}h_k(w_kx_{k+1})\overline{y_{k+1}}=\sum_{k=0}^{n}h_kx_{k+1}\overline{(w_ky_{k+1})}= \langle x, Wy \rangle \notag \\
 \Rightarrow & \lambda\langle Wx, x\rangle = \langle \lambda Wx, x\rangle= \langle \mathcal{A}x, x\rangle\notag= \langle x, \mathcal{A}x\rangle= \langle x, \lambda Wx\rangle= \overline{\lambda}\langle x, Wx\rangle=\overline{\lambda}\langle Wx, x\rangle\notag\\
\Rightarrow &(\lambda-\overline{\lambda})\langle Wx, x\rangle =0.
\end{align*}
Assume for a contradiction that $\langle Wx, x\rangle=0$. By definition of inner product \eqref{eq:inner-product}, $\langle Wx, x\rangle =\displaystyle\sum_{k=0}^{n} h_kw_kx_{k+1}\overline{x_{k+1}}=\displaystyle\sum_{k=0}^{n} h_kw_k|x_{k+1}|^2$. Since $h_k,~w_k >0$, $\langle Wx, x\rangle=0 \Rightarrow x_{k+1}=0$ for all $k \in \{0,1,\ldots,n\}$, together with Dirichlet boundary condition we obtain $x \equiv 0$, which contradicts the fact $\lambda $ is an eigenvalue of \eqref{eq:opSL}. Therefore, $\lambda-\overline{\lambda}=0 \Rightarrow \lambda = \overline{\lambda}$, i.e. $\lambda$ is real.

We next establish that \eqref{eq:main} has exactly $n$ eigenvalues, counted with multiplicity. We suppose $y_k=r_k\Delta_hx_k.$ Thus,
    \begin{gather*}
        \Delta_h\left(r_k\Delta_h x_k\right)+\left(\lambda w_k-q_k\right)x_{k+1}=0 \\
        \frac{y_{k+1}-y_{k}}{h_k}+\left(\lambda w_k-q_k\right)x_{k+1}=0 \\ 
        y_{k+1}-y_k-h_kq_kx_{k+1}=-\lambda h_k w_kx_{k+1} \\ 
        r_{k+1}\left(\frac{x_{k+2}-x_{k+1}}{h_{k+1}}\right)-r_k\left(\frac{x_{k+1}-x_k}{h_k}\right)-h_kq_kx_{k+1}=-\lambda h_kw_kx_{k+1} \\
        r_{k+1}\left(\frac{x_{k+2}}{h_{k+1}}-\frac{x_{k+1}}{h_{k+1}}\right)-r_k\left(\frac{x_{k+1}}{h_k}-\frac{x_{k}}{h_k}\right)-h_kq_kx_{k+1}=-\lambda h_kw_kx_{k+1} \\ 
        \frac{r_{k+1}x_{k+2}}{h_{k+1}}-\frac{r_{k+1}x_{k+1}}{h_{k+1}}-\frac{r_kx_{k+1}}{h_k}+\frac{r_kx_k}{h_k}-h_kq_kx_{k+1}=-\lambda h_kw_kx_{k+1} \\ x_{k+2}\left(\frac{r_{k+1}}{h_{k+1}}\right)+x_{k+1}\left(-\frac{r_{k+1}}{h_{k+1}}-\frac{r_k}{h_k}-h_kq_k\right)+x_k\left(\frac{r_k}{h_k}\right)=-\lambda h_kw_kx_{k+1} \\
        x_{k+2}\left(-\frac{r_{k+1}}{h_{k+1}}\right)+x_{k+1}\left(\frac{r_{k+1}}{h_{k+1}}+\frac{r_k}{h_k}+h_kq_k\right)+x_k\left(-\frac{r_k}{h_k}\right)=\lambda\left(h_kw_k\right)x_{k+1}
    \end{gather*}
    \begin{align*}D= \begin{bmatrix}
  {{h_0}{w_0}}&0& \cdots &0 \\ 
  0&{{h_1}{w_1}}& \cdots &0 \\ 
   \vdots &0& \ddots & \vdots  \\ 
  0&0& \cdots &{{h_{n - 1}}{w_{n - 1}}}
\end{bmatrix} \end{align*}
Observe that \(\left(D\right)_{j,j}=h_{j-1}w_{j-1}\) and otherwise, the entry is zero.

\begin{align*}
B=
\begin{bmatrix}
\dfrac{r_1}{h_1}+\dfrac{r_0}{h_0}+h_0 q_0 & -\dfrac{r_1}{h_1} & 0 & \cdots & 0 \\[1.4em]
-\dfrac{r_1}{h_1} & \dfrac{r_2}{h_2}+\dfrac{r_1}{h_1}+h_1 q_1 & -\dfrac{r_2}{h_2} & \ddots & \vdots \\[1.4em]
0 & -\dfrac{r_2}{h_2} & \dfrac{r_3}{h_3}+\dfrac{r_2}{h_2}+h_2 q_2 & \ddots & 0 \\[1.4em]
\vdots & \ddots & \ddots & \ddots & -\dfrac{r_{n-1}}{h_{n-1}} \\[1.4em]
0 & \cdots & 0 & -\dfrac{r_{n-1}}{h_{n-1}} &  \dfrac{r_n}{h_n}+\dfrac{r_{n-1}}{h_{n-1}}+h_{n-1} q_{n-1}
\end{bmatrix}
\end{align*}

\begin{align*}
&  B - \lambda D = M =\\  & 
\left[\begin{smallmatrix}
    \frac{r_1}{h_1} + \frac{r_0}{h_0} + h_0(q_0 - \lambda w_0) & -\frac{r_1}{h_1} & 0 & \cdots & 0 \\ 
    -\frac{r_1}{h_1} & \frac{r_2}{h_2} + \frac{r_1}{h_1} + h_1(q_1 - \lambda w_1) & -\frac{r_2}{h_2} & \ddots & \vdots  \\ 
    0 & -\frac{r_2}{h_2} & \frac{r_3}{h_3} + \frac{r_2}{h_2} + h_2(q_2 - \lambda w_2) & \ddots & 0 \\ 
    \vdots & \ddots & \ddots & \ddots & -\frac{r_{n-1}}{h_{n-1}} \\ 
    0 & \cdots & 0 & -\frac{r_{n-1}}{h_{n-1}} & \frac{r_n}{h_n} + \frac{r_{n-1}}{h_{n-1}} + h_{n-1}(q_{n-1} - \lambda w_{n-1}) 
  \end{smallmatrix}\right] \\
  \end{align*}
  The determinant of $M_{n\times n}$ is given by
\[
  \det(M_{n\times n}) = \sum_{\sigma \in S_n} \operatorname{sgn}(\sigma) \prod_{i=1}^n M_{i,\sigma(i)},
\]
where $S_n$ is the set of all permutations of $\{1,\dots,n\}$. Writing $M = B - \lambda D$, the entries satisfy
\[
  M_{i,\sigma(i)} =
  \begin{cases}
    \text{independent of } \lambda, & i \neq \sigma(i), \\[2pt]
    b_{ii} - \lambda d_{ii} \text{ with } d_{ii} \neq 0, & i = \sigma(i).
  \end{cases}
\]
Consequently, for a fixed $\sigma$, the product $\displaystyle \prod_{i=1}^n M_{i,\sigma(i)}$ is a polynomial in $\lambda$ whose degree equals the number of fixed points of $\sigma$, since only the diagonal factors (those with $i = \sigma(i)$) contribute a power of $\lambda$.
The identity permutation is the unique element of $S_n$ with $n$ fixed points; every other permutation has at most $n-2$, because no permutation can fix exactly $n-1$ points. Hence the term $\lambda^n$ arises solely from the identity, and its coefficient is
\[
  C_n = \prod_{i=1}^n (-d_{ii}) = (-1)^n \prod_{i=1}^n d_{ii} \neq 0.
\]
Therefore
\[
  \det M = \det(B - \lambda D) = P(\lambda),
  \qquad
  P(\lambda) = C_n\lambda^n + C_{n-1}\lambda^{n-1} + \dots + C_0,
\]
is a polynomial in $\lambda$ of degree exactly $n$. By the Fundamental Theorem of Algebra, $P$ has exactly $n$ roots in $\mathbb{C}$, counted with multiplicity. The eigenvalues of \eqref{eq:main} are precisely the roots of $P$, and we proved every such eigenvalue is real. Consequently, $P$ has exactly $n$ real roots counted with multiplicity.  Consequently, \eqref{eq:main} has
exactly $n$ real eigenvalues counted with multiplicity. This concludes the proof. \hfill \qed

\section{Numerical Implementation}
\label{sec:numerical}

In Section \ref{sec:spectral} we established that any eigenvalue of \eqref{eq:main} is real and that the problem admits exactly $n$ real eigenvalues, counted with multiplicity. We did not, however, establish that eigenvalues exist, nor did we develop the theory required to locate them. The present section addresses this gap computationally, exhibiting the eigenvalues of a test case by direct numerical solution. Setting $r_k \equiv 1$, $w_k \equiv 1$, and $q_k \equiv 0$ in \eqref{eq:main} yields the constant-coefficient problem
\begin{equation}
\label{eq:test-discrete}
\Delta_h(\Delta_h x_k) + \lambda\, x_{k+1} = 0, \qquad x_0 = x_{n+1} = 0.
\end{equation}
Its continuous counterpart is the regular Sturm--Liouville problem \begin{equation}
\label{eq:test-continuous}
y'' + \lambda y = 0 \quad \text{on } [0,\pi], \qquad y(0) = y(\pi) = 0,
\end{equation}
whose spectrum is known in closed form, $\lambda_k = k^2$ for $k = 1, 2, 3, \ldots$. This exact continuous spectrum provides a reference against which the computed eigenvalues are measured.

We compute the eigenvalues of \eqref{eq:test-discrete} by two methods, classical shooting and Pr\"ufer-based shooting, and for each we assess the accuracy of the approximation to the continuous values $\lambda_k = k^2$. This comparison reflects the situation encountered in practice. The continuous eigenvalue problem can rarely be solved in closed form, so one discretizes it and approximates its spectrum numerically, and the quantity of interest is the fidelity of that approximation to the true continuous eigenvalues. Under this measure, Pr\"ufer-based shooting is substantially more accurate than classical shooting on every grid considered, and the disparity increases as the grid becomes non-uniform. On the graded grid at large $n$, classical shooting fails to return eigenvalues, whereas Pr\"ufer-based shooting recovers them to full tolerance.

To understand the accuracy and efficiency of Prüfer-based shooting relative to classical shooting, we first describe the algorithm underlying each method.

\subsection{Classical Shooting vs Pr\"ufer-Based Shooting}

The classical shooting method converts the boundary value problem \eqref{eq:test-discrete} into an initial value problem. The left condition $x_0 = 0$ is imposed, and the first interior value $x_1$ is left free, which fixes the initial discrete slope $\Delta_h x_0 = x_1 / h_0$. With these two values the second-order recurrence obtained from \eqref{eq:test-discrete},
\begin{equation}
\label{eq:classical-recurrence}
x_{k+2} = \left(1 + \frac{h_{k+1}\, r_k}{h_k\, r_{k+1}} - \frac{h_{k+1} h_k}{r_{k+1}}\left(\lambda w_k - q_k\right)\right) x_{k+1} - \frac{h_{k+1}\, r_k}{h_k\, r_{k+1}}\, x_k,
\end{equation}
is marched forward from $k = 0$ to the right endpoint, producing the terminal value $x_{n+1}$ as a function of the free parameter. An eigenvalue is a value of $\lambda$ for which the terminal value meets the right boundary condition $x_{n+1} = 0$; the method searches for such $\lambda$ by adjusting the trial value and detecting sign changes in $x_{n+1}$. The limitation of this approach is that the marched sequence $x_k$ tracks the eigenfunction itself, whose magnitude can grow rapidly along the grid. On a strongly non-uniform grid $x_k$ can overflow before the right endpoint is reached, at which point the terminal condition can no longer be located.

\medskip 

By contrast, the Pr\"ufer-based method marches the phase rather than the solution itself. Applying the Pr\"ufer substitution $x_k = \mu_k\sin\theta_k$, $\Delta_h x_k = \mu_k\cos\theta_k$ to the test problem \eqref{eq:test-discrete} yields the phase and amplitude equations
\begin{align}
\sin\bigl(h_k\,\Delta_h\theta_k\bigr) &= h_k\Bigl(\cos\theta_k\cos\theta_{k+1} + \lambda\, h_k\cos\theta_k\sin\theta_{k+1} + \lambda\sin\theta_k\sin\theta_{k+1}\Bigr), \label{eq:prufer-phase-test} \\
\Delta_h\mu_k &= -\mu_k\Bigl(\tfrac{h_k}{2}\bigl[(\Delta_h\sin\theta_k)^2 + (\Delta_h\cos\theta_k)^2\bigr] - \sin\theta_{k+1}\cos\theta_k\notag\\ &+ \lambda h_k\cos\theta_{k+1}\cos\theta_k + \lambda\cos\theta_{k+1}\sin\theta_k\Bigr). \label{eq:prufer-amp-test}
\end{align}
The phase equation \eqref{eq:prufer-phase-test} contains no $\mu_k$, so the phase can be marched on its own. Both quantities are built from the $x_k$: the amplitude is $\mu_k = \sqrt{x_k^2 + (\Delta_h x_k)^2}$ and the phase is the angle $\theta_k = \operatorname{arccot}\!\bigl(\Delta_h x_k / x_k\bigr)$ that the vector $(x_k, \Delta_h x_k)$ makes with the horizontal axis. The difference is that the amplitude is unbounded while the phase is not. Geometrically, the vector $(x_k, \Delta_h x_k)$ may grow arbitrarily long, so $\mu_k$ carries the large classical value, but the angle it makes with the axis is always confined to a single revolution; and since $\mu_k > 0$ the vector never collapses to the origin, so this angle is always well-defined. The phase therefore stays bounded regardless of $\lambda$ or the grid, whereas the amplitude is precisely where the classical growth lives, as fixing $x_0 = 0$ and choosing $x_1$ already sets $\mu_0 = x_1/h_0$. Marching the phase alone never forms $x_k$, so the growth that overflows the classical march cannot occur.

In the next section we describe the three grid families used in our experiments: uniform, clustered, and graded. For the uniform grid, we refer the reader \cite{cetinkaya-er-menken} for a detailed algorithm of Pr\"ufer-based shooting, which can be adopted for the non-uniform grids with appropriate adjustments. Note that both non-uniform grids that we chose have unequal spacing, but not in a chaotic order.

\subsection{Three Grid Families}
\label{gridfam}
The purpose of using non-uniform grids is to test the two methods where they are most likely to differ. On a uniform grid both methods perform well, so any difference in robustness only becomes visible once the spacing is allowed to vary. We therefore compare the methods on three grid families of increasing non-uniformity, chosen so that the grid itself, rather than the problem, is what stresses the computation. Since the test problem is fixed and only the spacing changes, this isolates where classical shooting breaks.

We partition $[0,\pi]$ into $n$ subintervals with nodes $0 = t_0 < t_1 < \cdots < t_n = \pi$. The families differ only in how the spacings $h_k = t_{k+1} - t_k$ are chosen.

\emph{Uniform grid.} All subintervals have equal length:
\begin{equation}
  t_k = \frac{k\pi}{n}, \qquad h_k = \frac{\pi}{n}, \qquad k = 0, 1, \dots, n.
\end{equation}

\emph{Clustered grid.} Two uniform blocks joined at the break point $\sigma_d\pi$. Fix a subinterval fraction $\sigma_p \in (0,1)$ and a domain fraction $\sigma_d \in (0,1)$, and set $n_1 = \mathrm{round}(\sigma_p\, n)$ and $n_2 = n - n_1$. The first $n_1$ subintervals fill $[0,\sigma_d\pi]$ and the remaining $n_2$ fill $[\sigma_d\pi,\pi]$:
\begin{equation}
  h_k =
  \begin{cases}
    \dfrac{\sigma_d \pi}{n_1}, & 0 \le k < n_1, \\[8pt]
    \dfrac{(1-\sigma_d)\pi}{n_2}, & n_1 \le k < n.
  \end{cases}
\end{equation}
We use $\sigma_p = 0.7$ and $\sigma_d = 0.3$, so that $70\%$ of the subintervals lie in the first $30\%$ of the interval, giving a coarse-to-fine spacing ratio of approximately $5.4$.

\emph{Graded grid.} Subinterval lengths grow geometrically. Given a ratio $r > 1$, set $\tilde h_k = r^{\,k}$ for $k = 0, 1, \dots, n-1$ and rescale so the spacings sum to $\pi$:
\begin{equation}
  h_k = \frac{(r-1)\,r^{\,k}}{r^{\,n} - 1}\,\pi, \qquad t_k = \sum_{j=0}^{k-1} h_j.
\end{equation}
We use $r = 1.3$ with $n = 100$ for the accuracy experiments below. For the amplitude experiment of Table~\ref{tab:amplitude} we use the steeper ratio $r = 1.5$ so that the regular-shooting instability is visible within the first twelve eigenvalues.

\subsection{Eigenvalue Accuracy Across Grids}
\label{subsec:accuracy}
We run both methods on all three grids with $n = 100$, a uniform $\lambda$-sweep of $10^{5}$ points to bracket the eigenvalues, and a bisection tolerance of $10^{-11}$. For each computed eigenvalue we report the absolute errors
\[
|\text{err}|^{\text{reg}} = |\lambda_k^{\text{reg}} - \lambda_k|, \qquad
|\text{err}|^{\text{Pr}} = |\lambda_k^{\text{Pr}} - \lambda_k|,
\]
where $\lambda_k^{\text{reg}}$ and $\lambda_k^{\text{Pr}}$ are the eigenvalues returned by regular shooting and Pr\"ufer shooting, respectively, and $\lambda_k = k^2$ is the exact eigenvalue. Each of Tables~\ref{tab:uniform}, \ref{tab:clustered}, and \ref{tab:graded} lists the first ten eigenvalues individually and then every fifth eigenvalue up to $\lambda_{60}$.

\begin{table}[ht]
\centering\footnotesize
\setlength{\tabcolsep}{5pt}
\begin{tabular}{rrrrrr}
\toprule
$k$ & $\lambda_k = k^2$ & $\lambda_k^{\text{reg}}$ & $|\text{err}|^{\text{reg}}$ & $\lambda_k^{\text{Pr}}$ & $|\text{err}|^{\text{Pr}}$ \\
\midrule
$1$  & $1$    & $1.0000000163$    & $1.63\times 10^{-8}$ & $1.0000000000$    & $4.45\times 10^{-11}$ \\
$2$  & $4$    & $4.0000010375$    & $1.04\times 10^{-6}$ & $4.0000000001$    & $7.26\times 10^{-11}$ \\
$3$  & $9$    & $9.0000117977$    & $1.18\times 10^{-5}$ & $8.9999999999$    & $6.06\times 10^{-11}$ \\
$4$  & $16$   & $16.0000661241$   & $6.61\times 10^{-5}$ & $16.0000000000$   & $5.14\times 10^{-12}$ \\
$5$  & $25$   & $25.0002514447$   & $2.51\times 10^{-4}$ & $25.0000000001$   & $8.96\times 10^{-11}$ \\
$6$  & $36$   & $36.0007479063$   & $7.48\times 10^{-4}$ & $36.0000000000$   & $1.27\times 10^{-11}$ \\
$7$  & $49$   & $49.0018773255$   & $1.88\times 10^{-3}$ & $49.0000000000$   & $4.54\times 10^{-11}$ \\
$8$  & $64$   & $64.0041609968$   & $4.16\times 10^{-3}$ & $63.9999999999$   & $8.47\times 10^{-11}$ \\
$9$  & $81$   & $81.0083851914$   & $8.39\times 10^{-3}$ & $81.0000000001$   & $7.49\times 10^{-11}$ \\
$10$ & $100$  & $100.0156730119$  & $1.57\times 10^{-2}$ & $100.0000000001$  & $7.31\times 10^{-11}$ \\
$15$ & $225$  & $225.1708265875$  & $1.71\times 10^{-1}$ & $225.0000000000$  & $1.82\times 10^{-12}$ \\
$20$ & $400$  & $400.9018627489$  & $9.02\times 10^{-1}$ & $400.0000000000$  & $1.93\times 10^{-12}$ \\
$25$ & $625$  & $628.1691966435$  & $3.17\times 10^{0}$  & $625.0000000000$  & $1.82\times 10^{-12}$ \\
$30$ & $900$  & $908.5147455464$  & $8.51\times 10^{0}$  & $900.0000000000$  & $1.93\times 10^{-12}$ \\
$35$ & $1225$ & $1243.7293728658$ & $1.87\times 10^{1}$  & $1225.0000000000$ & $1.82\times 10^{-12}$ \\
$40$ & $1600$ & $1634.7863178329$ & $3.48\times 10^{1}$  & $1600.0000000000$ & $2.73\times 10^{-12}$ \\
$45$ & $2025$ & $2079.5971264042$ & $5.46\times 10^{1}$  & $2025.0000000000$ & $2.27\times 10^{-12}$ \\
$50$ & $2500$ & $2569.4022595094$ & $6.94\times 10^{1}$  & $2500.0000000000$ & $2.73\times 10^{-12}$ \\
$55$ & $3025$ & $3085.0475824249$ & $6.00\times 10^{1}$  & $3025.0000000000$ & $2.27\times 10^{-12}$ \\
$60$ & $3600$ & $3597.2162582778$ & $2.78\times 10^{0}$  & $3600.0000000000$ & $1.82\times 10^{-12}$ \\
\bottomrule
\end{tabular}
\caption{Uniform grid, $n = 100$. The first ten eigenvalues are listed individually and thereafter every fifth eigenvalue up to $\lambda_{60}$.}
\label{tab:uniform}
\end{table}

On the uniform grid (Table~\ref{tab:uniform}) both methods are accurate for the lowest modes, but the two behave very differently as the mode index increases. The Pr\"ufer error stays at the bisection tolerance, near $10^{-11}$, for every eigenvalue and does not depend on $k$. The regular-shooting error, by contrast, increases steadily with $k$: it is $1.6\times 10^{-8}$ at $k=1$, reaches $1.6\times 10^{-2}$ by $k=10$, and grows to order $10^{1}$ near $k=40$. The regular method loses accuracy on the higher modes because the mesh no longer resolves the increasingly oscillatory eigenfunctions, while the Pr\"ufer method is unaffected.

\begin{table}[ht]
\centering\footnotesize
\setlength{\tabcolsep}{5pt}
\begin{tabular}{rrrrrr}
\toprule
$k$ & $\lambda_k = k^2$ & $\lambda_k^{\text{reg}}$ & $|\text{err}|^{\text{reg}}$ & $\lambda_k^{\text{Pr}}$ & $|\text{err}|^{\text{Pr}}$ \\
\midrule
$1$  & $1$    & $1.0000003364$    & $3.36\times 10^{-7}$ & $1.0000000000$    & $4.45\times 10^{-11}$ \\
$2$  & $4$    & $4.0000214048$    & $2.14\times 10^{-5}$ & $4.0000000001$    & $7.26\times 10^{-11}$ \\
$3$  & $9$    & $9.0002414826$    & $2.41\times 10^{-4}$ & $8.9999999999$    & $6.06\times 10^{-11}$ \\
$4$  & $16$   & $16.0013386422$   & $1.34\times 10^{-3}$ & $16.0000000000$   & $5.14\times 10^{-12}$ \\
$5$  & $25$   & $25.0050185146$   & $5.02\times 10^{-3}$ & $25.0000000001$   & $8.96\times 10^{-11}$ \\
$6$  & $36$   & $36.0146687647$   & $1.47\times 10^{-2}$ & $36.0000000000$   & $1.27\times 10^{-11}$ \\
$7$  & $49$   & $49.0360619007$   & $3.61\times 10^{-2}$ & $49.0000000000$   & $4.54\times 10^{-11}$ \\
$8$  & $64$   & $64.0780124155$   & $7.80\times 10^{-2}$ & $63.9999999999$   & $8.47\times 10^{-11}$ \\
$9$  & $81$   & $81.1528810054$   & $1.53\times 10^{-1}$ & $81.0000000001$   & $7.49\times 10^{-11}$ \\
$10$ & $100$  & $100.2768060274$  & $2.77\times 10^{-1}$ & $100.0000000001$  & $7.31\times 10^{-11}$ \\
$15$ & $225$  & $227.3860344556$  & $2.39\times 10^{0}$  & $225.0000000000$  & $1.82\times 10^{-12}$ \\
$20$ & $400$  & $407.7564289865$  & $7.76\times 10^{0}$  & $400.0000000000$  & $1.93\times 10^{-12}$ \\
$25$ & $625$  & $628.5282509898$  & $3.53\times 10^{0}$  & $625.0000000000$  & $1.82\times 10^{-12}$ \\
$30$ & $900$  & $840.3700687102$  & $5.96\times 10^{1}$  & $900.0000000000$  & $1.93\times 10^{-12}$ \\
$35$ & $1225$ & $998.5946995148$  & $2.26\times 10^{2}$  & $1225.0000000000$ & $1.82\times 10^{-12}$ \\
$40$ & $1600$ & $1116.1372616128$ & $4.84\times 10^{2}$  & $1600.0000000000$ & $2.73\times 10^{-12}$ \\
$45$ & $2025$ & $1224.5523717195$ & $8.00\times 10^{2}$  & $2025.0000000000$ & $2.27\times 10^{-12}$ \\
$50$ & $2500$ & $1358.3033745352$ & $1.14\times 10^{3}$  & $2500.0000000000$ & $2.73\times 10^{-12}$ \\
$55$ & $3025$ & $1577.9812201966$ & $1.45\times 10^{3}$  & $3025.0000000000$ & $2.27\times 10^{-12}$ \\
$60$ & $3600$ & $1999.5096864725$ & $1.60\times 10^{3}$  & $3600.0000000000$ & $1.82\times 10^{-12}$ \\
\bottomrule
\end{tabular}
\caption{Clustered grid, $n = 100,~r=1.3$. The first ten eigenvalues are listed individually and thereafter every fifth eigenvalue up to $\lambda_{60}$.}
\label{tab:clustered}
\end{table}

On the clustered grid (Table~\ref{tab:clustered}) the same pattern holds, but the regular-shooting error grows faster. At $k=10$ it is $2.8\times 10^{-1}$, roughly an order of magnitude larger than the uniform-grid error at the same mode. At the higher modes the regular method no longer tracks the true spectrum at all: at $k=60$ it returns $1999.5$ against the true value $3600$. The Pr\"ufer error remains at the bisection tolerance across every mode, unchanged from the uniform grid.

\begin{table}[ht]
\centering\footnotesize
\setlength{\tabcolsep}{5pt}
\begin{tabular}{rrrrrr}
\toprule
$k$ & $\lambda_k = k^2$ & $\lambda_k^{\text{reg}}$ & $|\text{err}|^{\text{reg}}$ & $\lambda_k^{\text{Pr}}$ & $|\text{err}|^{\text{Pr}}$ \\
\midrule
$1$  & $1$    & $1.0012290594$    & $1.23\times 10^{-3}$ & $1.0000000000$    & $4.45\times 10^{-11}$ \\
$2$  & $4$    & $4.0416450578$    & $4.16\times 10^{-2}$ & $4.0000000001$    & $7.26\times 10^{-11}$ \\
$3$  & $9$    & $8.8312738207$    & $1.69\times 10^{-1}$ & $8.9999999999$    & $6.06\times 10^{-11}$ \\
$4$  & $16$   & $13.0750598683$   & $2.92\times 10^{0}$  & $16.0000000000$   & $5.14\times 10^{-12}$ \\
$5$  & $25$   & $18.5934352482$   & $6.41\times 10^{0}$  & $25.0000000001$   & $8.96\times 10^{-11}$ \\
$6$  & $36$   & $25.2962038871$   & $1.07\times 10^{1}$  & $36.0000000000$   & $1.27\times 10^{-11}$ \\
$7$  & $49$   & $34.5418828258$   & $1.45\times 10^{1}$  & $49.0000000000$   & $4.54\times 10^{-11}$ \\
$8$  & $64$   & $46.7784765146$   & $1.72\times 10^{1}$  & $63.9999999999$   & $8.47\times 10^{-11}$ \\
$9$  & $81$   & $62.2179518992$   & $1.88\times 10^{1}$  & $81.0000000001$   & $7.49\times 10^{-11}$ \\
$10$ & $100$  & $84.0272562384$   & $1.60\times 10^{1}$  & $100.0000000001$  & $7.31\times 10^{-11}$ \\
$15$ & $225$  & $335.6984898946$  & $1.11\times 10^{2}$  & $225.0000000000$  & $1.82\times 10^{-12}$ \\
$20$ & $400$  & $1309.7276468121$ & $9.10\times 10^{2}$  & $400.0000000000$  & $1.93\times 10^{-12}$ \\
$23$ & $529$  & $2904.0757565460$ & $2.38\times 10^{3}$  & $528.9999999999$  & $6.74\times 10^{-11}$ \\
$24$ & $576$  & ---               & ---                  & $576.0000000000$  & $1.53\times 10^{-11}$ \\
$30$ & $900$  & ---               & ---                  & $900.0000000000$  & $1.93\times 10^{-12}$ \\
$45$ & $2025$ & ---               & ---                  & $2025.0000000000$ & $2.27\times 10^{-12}$ \\
$60$ & $3600$ & ---               & ---                  & $3600.0000000000$ & $1.82\times 10^{-12}$ \\
\bottomrule
\end{tabular}
\caption{Graded grid, $r = 1.3$, $n = 100$. The first ten eigenvalues are listed individually and thereafter every fifth eigenvalue up to $\lambda_{60}$. Regular shooting returns no eigenvalue from $k = 24$ onward, indicated by a dash.}
\label{tab:graded}
\end{table}

On the graded grid (Table~\ref{tab:graded}) the regular method fails earliest. Its error exceeds unity already at $k=4$, and beyond this point the returned values no longer track $k^2$. From $k = 24$ onward the $\lambda$-sweep produces no sign change and the regular method returns no eigenvalue at all. The Pr\"ufer method, in contrast, recovers every eigenvalue to the bisection tolerance across the entire range, including the modes where the regular method fails completely.

Across all three grids the Pr\"ufer error stays at the bisection tolerance and is independent of both the mode index and the grid. The regular-shooting error increases with the mode index on every grid, and the increase is faster the more non-uniform the grid: mild on the uniform grid, pronounced on the clustered grid, and severe enough on the graded grid that the method breaks down entirely for the higher modes.

\begin{rem}{\rm
For the test problem \eqref{eq:test-discrete}, the Pr\"ufer phase is advanced by the classical
fourth-order Runge--Kutta method applied to the continuous phase equation
$\theta'(t) = \sqrt{\lambda}$, which follows from the Pr\"ufer substitution when $q \equiv 0$.
Because this right-hand side is constant in $t$, its Runge--Kutta update is exact: the four
stages coincide and the step reduces to $\theta_{k+1} = \theta_k + \sqrt{\lambda}\,h_k$,
independent of the step size $h_k$. Consequently the Pr\"ufer eigenvalues reported in
Tables~\ref{tab:uniform}--\ref{tab:amplitude} carry no integration error, and the residual
$|\text{err}|^{\text{Pr}}$ reflects only the bisection tolerance $10^{-11}$. The comparison
of this section is therefore not a contest of discretization accuracy, but a demonstration of
robustness.}
\end{rem}

\subsection{Amplitude Growth and the Failure of Regular Shooting}
\label{subsec:amplitude}
The tables above show \emph{that} regular shooting fails on the non-uniform grids; the amplitude of the marched solution shows \emph{why}. Table~\ref{tab:amplitude} reports, for the graded grid with the steeper ratio $r = 1.5$, the maximum amplitude $\|y\|_\infty$ of the computed eigenfunction alongside the eigenvalue error for each method.

\begin{table}[ht]
\centering\footnotesize
\setlength{\tabcolsep}{4pt}
\begin{tabular}{rrrrrrrr}
\toprule
$k$ & $\lambda_k = k^2$ & $\lambda_k^{\text{reg}}$ & $|\text{err}|^{\text{reg}}$ & $\|y\|_\infty^{\text{reg}}$ & $\lambda_k^{\text{Pr}}$ & $|\text{err}|^{\text{Pr}}$ & $\|y\|_\infty^{\text{Pr}}$ \\
\midrule
$1$  & $1$   & $1.0051963351$    & $5.20\times 10^{-3}$ & $9.83\times 10^{-1}$  & $1.0000000000$   & $3.61\times 10^{-11}$ & $0.9848$ \\
$2$  & $4$   & $3.9355309185$    & $6.45\times 10^{-2}$ & $4.85\times 10^{-1}$  & $4.0000000000$   & $9.29\times 10^{-11}$ & $0.9580$ \\
$3$  & $9$   & $6.6241451380$    & $2.38\times 10^{0}$  & $3.88\times 10^{-1}$  & $9.0000000000$   & $6.67\times 10^{-11}$ & $0.9580$ \\
$4$  & $16$  & $11.4803235724$   & $4.52\times 10^{0}$  & $2.91\times 10^{-1}$  & $16.0000000000$  & $6.14\times 10^{-11}$ & $0.9965$ \\
$5$  & $25$  & $17.5608306298$   & $7.44\times 10^{0}$  & $2.35\times 10^{-1}$  & $25.0000000000$  & $8.17\times 10^{-11}$ & $0.9983$ \\
$6$  & $36$  & $28.3739391594$   & $7.63\times 10^{0}$  & $2.98\times 10^{-1}$  & $36.0000000000$  & $6.29\times 10^{-11}$ & $0.9965$ \\
$7$  & $49$  & $43.7981333758$   & $5.20\times 10^{0}$  & $1.54\times 10^{0}$   & $49.0000000000$  & $8.84\times 10^{-12}$ & $0.9600$ \\
$8$  & $64$  & $67.3408760645$   & $3.34\times 10^{0}$  & $6.06\times 10^{0}$   & $64.0000000000$  & $8.44\times 10^{-11}$ & $0.9950$ \\
$9$  & $81$  & $104.9387331786$  & $2.39\times 10^{1}$  & $8.66\times 10^{1}$   & $81.0000000000$  & $3.88\times 10^{-11}$ & $0.9965$ \\
$10$ & $100$ & $156.7977237034$  & $5.68\times 10^{1}$  & $7.54\times 10^{2}$   & $100.0000000000$ & $3.04\times 10^{-12}$ & $0.9848$ \\
$11$ & $121$ & $245.4759718825$  & $1.24\times 10^{2}$  & $2.74\times 10^{4}$   & $121.0000000000$ & $1.91\times 10^{-11}$ & $0.9870$ \\
$12$ & $144$ & $360.9182292368$  & $2.17\times 10^{2}$  & $5.07\times 10^{5}$   & $144.0000000000$ & $9.49\times 10^{-12}$ & $0.9950$ \\
\bottomrule
\end{tabular}
\caption{Graded grid, $r = 1.5$, $n = 100$, first twelve eigenvalues. The columns $\|y\|_\infty^{\text{reg}}$ and $\|y\|_\infty^{\text{Pr}}$ report the maximum amplitude of the computed eigenfunction for each method.}
\label{tab:amplitude}
\end{table}

As shown in Table~\ref{tab:amplitude}, the regular-shooting amplitude grows from order one to order $10^{5}$ by $k = 12$. This growth is the mechanism behind the loss of accuracy: the marched solution overflows before it reaches the right endpoint, so the terminal condition can no longer be located and the returned eigenvalue is meaningless. The Pr\"ufer amplitude stays near one throughout, because the Pr\"ufer method never marches the solution itself; it tracks the phase directly and recovers the amplitude separately only when the eigenfunction is needed. 

\subsection{Summary of Results}
\label{subsec:summary}
The experiments support the following conclusions for the test problem with $q \equiv 0$. On the uniform grid, regular shooting is accurate for the lowest modes, with the error increasing as the mode index grows and the mesh fails to resolve the increasingly oscillatory eigenfunctions. On the non-uniform grids the regular-shooting error grows faster: on the clustered grid it is about an order of magnitude larger than on the uniform grid at the same mode, and on the graded grid with $r = 1.3$ the method breaks down and returns no eigenvalue from $k \approx 24$ onward. Pr\"ufer shooting, by contrast, is insensitive to the grid; its error remains at the bisection tolerance for every mode index and every grid tested, including the modes where regular shooting fails entirely.

\section{Conclusion}
\label{sec:conclusion}
We carried the discrete Pr\"ufer transformation onto non-uniform grids and used it to study the Sturm--Liouville difference equation \eqref{eq:main}. We showed that a nontrivial solution is equivalent to a solution of the Pr\"ufer system (Theorem~\ref{thm:prufer}), and that any eigenvalue is real, with exactly $n$ in total, counted with multiplicity (Theorem~\ref{thm:spectral}). Numerically, we found that Pr\"ufer-based shooting stays accurate on non-uniform grids where regular shooting loses accuracy and eventually fails. 

One question remains open. The phase equation \eqref{lambda eq:thmB} is independent of the amplitude, so it may in principle be solved on its own, but its solvability in general is not established. In the continuous case, the existence and uniqueness of the phase follow from the Picard--Lindel\"of theorem. In the discrete case, making \eqref{lambda eq:thmB} explicit in $\theta_{k+1}$ in terms of $\theta_k$ requires a division that holds only under additional assumptions. Bohner--Do\v{s}l\'y note the same difficulty for the uniform-grid case \cite[Remark 2.2, p.~2718]{bohner-dosly}. Proving the existence of a solution to \eqref{lambda eq:thmB} directly without additional assumptions is a natural next step, which we intend to pursue in future work.

\section*{Acknowledgements}
The authors thank their project advisors, Dr. Shalmali Bandyopadhyay and Dr. Jacob Blazejewski, for their guidance and support throughout this project. This work was conducted as part of the Mathematical Association of America's National Research Experiences for Undergraduates Program (NREUP) at the University of Tennessee at Martin, supported by funding from the National Science Foundation under award DMS-2308688. The authors also thank the Department of Mathematics and Statistics at the University of Tennessee at Martin for the resources provided during this research.

\section*{Conflict of Interest}
\noindent The authors declare that they have no conflicts of interest.

\section*{AI Disclosure Statement}
\noindent The authors disclose that a generative AI tool, Claude Opus 4.7 (Anthropic), was used during the preparation of this work. The usage was for Section 5 (Numerical Implementation) that consisted of debugging and output testing. The authors wrote and reviewed all results themselves.

\end{document}